\definecolor{Red}{cmyk}{0,1,1,0.2}
 \def\hgi{\widehat{\gamma}_i}
\def\bg{\overline{\gamma}}
\newtheorem{definition}{Definition}[section]
\theoremstyle{definition}
\newtheorem{example}{Example}[section]
\theoremstyle{remark}
\newtheorem{remark}{Remark}[section]
\theoremstyle{plain}
\newtheorem{theorem}{Theorem}[section]
\newtheorem{lemma}{Lemma}[section]
\newtheorem{proposition}{Proposition}[section]
\newtheorem{corollary}{Corollary}[section]
\numberwithin{equation}{section}
\title{\bf Existence and uniqueness for Mean Field Games\\ with state constraints\footnote{This work was partly supported by the University of Rome ``Tor Vergata" (Consolidate the Foundations 2015) and by the Istituto Nazionale di Alta Matematica ``F. Severi'' (GNAMPA 2016 Research Projects). The second author is grateful to the Università Italo Francese (Vinci Project 2015).}}
\author{{\sc Piermarco Cannarsa}\footnote{%
		Dipartimento di Matematica, Universit\`{a}  di Roma ``Tor Vergata" - {\tt cannarsa@mat.uniroma2.it}}\\
	{\sc Rossana Capuani}\footnote{%
		Dipartimento di Matematica, Universit\`{a} di Roma ``Tor Vergata" and CEREMADE, Universit\'e Paris-Dauphine- {\tt capuani@mat.uniroma2.it}}
			\\}
\date{}
\begin{document}
	\maketitle
	\begin{abstract}
		\noindent
		In this paper, we study deterministic mean field games for agents who operate in a bounded domain. In this case, the existence and uniqueness of Nash equilibria cannot be deduced as for unrestricted state space because, for a large set of initial conditions, the uniqueness of the solution to the associated minimization problem is no longer guaranteed. We attack the problem by interpreting equilibria as measures in a space of arcs. In such a relaxed environment the existence of solutions follows by set-valued fixed point arguments. Then, we give a uniqueness result for such equilibria under a classical monotonicity assumption.
	\end{abstract}
	\noindent \textit{Keywords}: Mean field games, Nash equilibrium, state constraints, Hamilton-Jacobi-Bellman equations. \\
	\noindent \textbf{MSC Subject classifications}: 49J15, 49J30, 49J53, 49N90.\\
\section{Introduction}
Mean field games (MFG) theory has been introduced simultaneously by Lasry and Lions (\cite{8}, \cite{9}, \cite{10}) and by Huang, Malham\'{e} and Caine (\cite{h1}, \cite{h2}) in order to study large population differential games. 
The main idea of such a theory is to borrow from statistical physics the general principle of a mean-field approach to describe equilibria in a system of many interacting particles. 

In game theory, for a system with a finite number of players, the natural notion of equilibrium is the one introduced by John Nash. So, the notion of mean-field equilibrium suggested by Lasry-Lions is justified as being the limit, as $N\rightarrow\infty$, of the Nash equilibria for $N$-player games, under the assumption that players are symmetric and rational. 

In  deterministic settings, the equilibrium found in the mean field limit turns out to be a solution of the  forward-backward system of PDEs  
\begin{equation}\label{mfg}
(MFG)
\begin{cases}
-\partial_t u +H(x,Du)=F(x,m) \ \ \ \mbox{in} \ [0,T]\times {\Omega},\\
\partial_t m-div(mD_pH(x,Du))=0\ \ \ \mbox{in} \ [0,T]\times {\Omega},\\
m(0)=m_0 \ \ \ \ u(x,T)=G(x,m(T))
\end{cases}
\end{equation}
which couples a Hamilton-Jacobi-Bellman equation (for the value function $u$ of the generic player) with a continuity equation (for the density $m$ of players). Here $\Omega\subset \mathbb{R}^n$ represents the domain in the state space in which agents are supposed to operate.

The well-posedness of system \eqref{mfg} was developed for special geometries of the domain $\Omega$, namely when $\Omega$ equals the flat torus $\mathbb{T}^n=\mathbb{R}^n/\mathbb{Z}^n$, or the whole space $\mathbb{R}^n$ (see, e.g., \cite{34}, \cite{9}, \cite{10}).
The goal of the present paper is to study the well-posedness of the MFG problem subject to state constraints, that is, when players are confined into a compact domain $\overline{\Omega}\subseteq \mathbb{R}^n$. 

In the above references, the solution of $\eqref{mfg}$ on $[0,T]\times \mathbb{T}^n$ is obtained by a fixed point argument which uses in an essential way the fact that viscosity solutions of the Hamilton-Jacobi equation
\begin{equation*}
-\partial_t u +H(x,Du)=F(x,m)\quad \mbox{in} \ [0,T]\times \mathbb{T}^n
\end{equation*}
are smooth on a sufficiently large set to allow the continuity equation
\begin{equation*}
\partial_t m-div(mD_pH(x,Du))=0 \quad \mbox{in} \ [0,T]\times \mathbb{T}^n
\end{equation*}
to be solvable. Specifically, it is known that $u$ is of class $C^{1,1}_{loc}$ outside a closed singular set of zero Lebesgue measure. In this way, the coefficient $D_pH(x,Du)$ in the continuity equation turns out to be locally Lipschitz continuous on a ``sufficiently large'' open set. Such an ``almost smooth'' structure is lost in the presence of  state constraints \cite[ Example~1.1]{35}.
Therefore, in order to prove the existence of solutions to \eqref{mfg} a complete change of paradigm is necessary. 

In this paper, following the Lagrangian formulation of the unconstrained  MFG problem proposed in \cite{cc}, we define a ``relaxed'' notion of constrained MFG equilibria and solutions, for which we give existence and uniqueness results. Such a formulation consists of replacing probability measures on $\overline{\Omega}$ with measures on arcs in $\overline{\Omega}$.
More precisely, on the metric space
\begin{equation*}
\Gamma=\Big\{\gamma\in AC(0,T;\mathbb{R}^n): \gamma(t)\in\overline{\Omega}, \ \ \ \forall t\in[0,T] \Big\}
\end{equation*}
with the uniform metric, for any $t\in [0,T]$ we consider the evaluation map  $e_t:\Gamma\to \overline \Omega$ defined by 
$$
e_t(\gamma)= \gamma(t)\qquad (\gamma\in\Gamma).
$$
Given any probability measure $m_0$ on $\overline{\Omega}$, we denote by ${\mathcal P}_{m_0}(\Gamma)$ the set of all Borel probability measures $\eta$ on $\Gamma$ such that $e_0\sharp \eta=m_0$ and we consider,
for any $\eta \in \mathcal{P}_{m_0}(\Gamma)$, 
the functional
\begin{equation}
J_\eta [\gamma]=\int_0^T \Big[L(\gamma(t),\dot \gamma(t))+ F(\gamma(t),e_t\sharp\eta)\Big]\ dt + G(\gamma(T),e_T\sharp\eta)
\qquad(\gamma\in\Gamma).
\end{equation}
Then, we call a measure $\eta\in\mathcal{P}_{m_0}(\Gamma)$ a {\em constrained MFG equilibrium} for $m_0$ if $\eta$ is supported on the set of all curves $\overline{\gamma}\in \Gamma$ such that
\begin{equation*}
J_\eta[\overline{\gamma}]\leq J_\eta[\gamma]  \ \ \forall \gamma\in\Gamma, \ \gamma(0)=\overline{\gamma}(0).
\end{equation*}
Thus, we  obtain the existence of constrained MFG equilibria for $m_0$ (Theorem \ref{tesistenza}) by applying the Kakutani fixed point theorem \cite{ka}. At this point, it is natural to define a {\em mild solution of the constrained MFG problem} in $\overline{\Omega}$ as a pair $(u,m)\in C([0,T]\times \overline{\Omega})\times C([0,T]; \mathcal{P}(\overline{\Omega}))$, where 
$m$ is given by
$m(t)=e_t\sharp \eta$
for some constrained MFG equilibrium $\eta$ for $m_0$ and
\begin{equation*}
u(t,x)=  \inf_{\tiny\begin{array}{c}
	\gamma\in \Gamma\\
	\gamma(t)=x
	\end{array}} 
\Big\{\int_t^T \Big[L(\gamma(s),\dot \gamma(s))+ F(\gamma(s), m(s))\Big]\ ds + G(\gamma(T),m(T))\Big\}.
\end{equation*} 
In this way, the existence of mild solutions of the constrained MFG problem in $\overline{\Omega}$ (Corollary \ref{cesistenza}) becomes an easy corollary of the existence of equilibria for $m_0$ (Theorem \ref{tesistenza}), whereas the uniqueness issue for such a problem remains a more challenging question.  As observed by Lasry and Lions, in absence of state constraints uniqueness can be addressed by imposing suitable monotonicity assumptions on the data. We show that the same general strategy can be adopted even for constrained problems (Theorem \ref{u}). However, we have to interpret the Lasry-Lions method differently because, as recalled above, solutions are highly nonsmooth in our case.

The results of this paper can be regarded as an initial step of the study of deterministic MFG systems with state constraints. The natural sequel of our analysis would be to show that mild solutions to the constrained MFG problem in $\overline{\Omega}$ satisfy the MFG system in a suitable point-wise sense and, possibly, derive the uniqueness of solutions from such a system.

This paper is organised as follows. In Section 2, we introduce the notation and recall preliminary results. In Section 3, we define constrained MFG equilibria and we prove their existence. Section 4 is devoted to the study of mild solutions of the constrained MFG problem, in particular to the uniqueness issue. 
	\section{Preliminaries}
	\subsection{Notation}
	Throughout this paper we denote by $|\cdot|$, $\langle  \cdot  \rangle$ , respectively, the Euclidean norm and scalar
	product in $\mathbb{R}^n$. For any subset $S \subset \mathbb{R}^n$, $\overline{S}$ stands for its closure, $\partial S$ for its boundary and $S^c = \mathbb{R}^n \setminus S$ for the complement of $S$. We denote by $\mathbf{1}_{S}:\mathbb{R}^n\rightarrow \{0,1\}$ the characteristic function of $S$, i.e.,
	\begin{align*}
	\mathbf{1}_{S}(x)=
	\begin{cases}
	1  \ \ \ &x\in S,\\
	0 &x\in S^c.
	\end{cases}
	\end{align*}
	 We write $AC(0,T;\mathbb{R}^n)$ for the space of all absolutely continuous $\mathbb{R}^n$-valued functions on $[0,T]$, equipped with the uniform metric. We observe that such a space is not complete.\\
	 For any measurable function $f:[0,T]\rightarrow \mathbb{R}^n$, we set
	 \begin{equation*}
	 ||f||_2=\left(\int_0^T |f|^2\,dt\right)^{\frac{1}{2}}.
	 \end{equation*}
 	Let $\Omega \subset \mathbb{R}^n$ be a bounded open set with $\mathcal{C}^2$ boundary.
	The distance function from $\overline{\Omega}$ is the function $d_\Omega :\mathbb{R}^n \rightarrow [0,+ \infty[$ defined by
	\begin{equation*}
	d_\Omega(x):= \inf_{y \in \overline{\Omega}} |x-y| \ \ \ \ \ (x\in\mathbb{R}^n).
	\end{equation*}
 We define the oriented boundary distance from $\partial \Omega$ by
\begin{equation}
b_\Omega(x)=d_\Omega(x) -d_{\Omega^c}(x) \ \ \ \ (x\in\mathbb{R}^n).
\end{equation}
We recall that, since the boundary of $\Omega$ is of class $\mathcal{C}^2$, there exists $\rho_0>0$ such that
\begin{equation}\label{dn}
b_\Omega(\cdot)\in C^2_b \ \ \text{on} \ \ \partial\Omega +B_{\rho_0}=\Big\{y\in B(x,\rho_0): x\in \partial\Omega\Big\},
\end{equation}
where $C^2_b$ is the set of all functions with bounded derivates of first and second order. Throughout the paper, we suppose that $\rho_0$ is fixed so that \eqref{dn} holds.\\

	\subsection{Results from Measure Theory}
	In this section we introduce, without proof, some basic tools needed in the paper (see, e.g., \cite{5}).\\
	Let $X$ be a separable metric space, we denote by $\mathscr{B}(X)$ the family of the Borel subset of $X$ and by $\mathcal{P}(X)$ the family of all Borel probability measures on $X$.
	The support of $\mu \in \mathcal{P}(X)$, $supp(\mu)$, is the closed set defined by
	\begin{equation}
	supp (\mu) := \Big \{x \in X: \mu(V)>0\ \mbox{for each neighborhood V of $x$}\Big\}.
	\end{equation}
	We say that a sequence $(\mu_n)\subset \mathcal{P}(X)$ is narrowly convergent to $\mu \in \mathcal{P}(X)$ if
	\begin{equation*}
	\lim_{n\rightarrow \infty} \int_X f(x)\,d\mu_n(x)=\int_X f(x) \,d\mu(x) \ \ \ \ \forall f \in C^0_b(X),
	\end{equation*}
	where $C^0_b(X)$ is the set of all bounded continuous functions on $X$.\\
	We recall an interesting link between narrow convergence of probability measures and Kuratowski convergence of their supports.
	\begin{proposition}\label{p21}
		If $(\mu_n)\subset \mathcal{P}(X)$ is a sequence narrowly converging to $\mu \in \mathcal{P}(X)$ then $supp(\mu) \subset K-\liminf_{n\rightarrow \infty} supp(\mu_n)$, i.e.
		\begin{equation*}
		\forall x\in supp(\mu) \ \exists \ x_n \in supp(\mu_n) : \lim_{n\rightarrow \infty} x_n=x.
	\end{equation*}
	\end{proposition}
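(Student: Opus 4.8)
The plan is to reduce the statement to a local ``mass does not vanish'' property and then extract a converging sequence by a diagonal argument. Fix $x\in supp(\mu)$, and for $r>0$ let $B(x,r)$ denote the open ball of radius $r$ centered at $x$. By the very definition of the support, $\mu(B(x,r))>0$ for every $r>0$. The heart of the matter is therefore to transfer this positivity of mass from $\mu$ to $\mu_n$ for $n$ large, using narrow convergence.

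To this end I would first establish the lower-semicontinuity inequality
\begin{equation*}
\liminf_{n\to\infty}\mu_n(U)\geq \mu(U)\qquad\text{for every open set }U\subseteq X,
\end{equation*}
which is the only consequence of narrow convergence needed here. This follows by approximating $\mathbf{1}_{U}$ from below by an increasing sequence of bounded continuous functions, e.g. $f_j(y)=\min\{1,\,j\,d(y,U^c)\}$, for which $0\le f_j\le \mathbf{1}_{U}$ and $f_j\uparrow \mathbf{1}_{U}$ pointwise. Since $\int_X f_j\,d\mu_n\le \mu_n(U)$ and $\int_X f_j\,d\mu_n\to\int_X f_j\,d\mu$ by narrow convergence, one gets $\int_X f_j\,d\mu\le \liminf_n\mu_n(U)$ for every $j$; letting $j\to\infty$ and using monotone convergence yields the claim.

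Applying this with $U=B(x,r)$ gives $\liminf_n \mu_n(B(x,r))\ge \mu(B(x,r))>0$, so there is an index $N(r)$ with $\mu_n(B(x,r))>0$ for all $n\ge N(r)$. Next I would observe that $\mu_n(B(x,r))>0$ forces the ball to meet $supp(\mu_n)$: indeed, if $B(x,r)\cap supp(\mu_n)=\varnothing$ then $B(x,r)\subseteq (supp(\mu_n))^c$, a set of $\mu_n$-measure zero, contradicting $\mu_n(B(x,r))>0$. Hence for every $r>0$ and all $n\ge N(r)$ there is a point of $supp(\mu_n)$ within distance $r$ of $x$.

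Finally I would assemble the sequence by a diagonal procedure. Choosing radii $r=1/k$ and a strictly increasing sequence of thresholds $N_k:=N(1/k)$, for $N_k\le n<N_{k+1}$ I pick $x_n\in B(x,1/k)\cap supp(\mu_n)$, and for the finitely many $n<N_1$ I pick $x_n\in supp(\mu_n)$ arbitrarily (the support of a probability measure on a separable metric space being nonempty, by a Lindel\"of covering argument). Then $|x_n-x|\to 0$, so $x_n\to x$ with $x_n\in supp(\mu_n)$, which is exactly the assertion $x\in K\text{-}\liminf_n supp(\mu_n)$. The main obstacle I anticipate is the open-set inequality above: it is the single step where narrow convergence is genuinely used, and one must be careful that the approximating functions lie \emph{below} $\mathbf{1}_{U}$ so that the inequality points in the right direction; the remaining steps are elementary bookkeeping.
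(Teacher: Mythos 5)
Your proof is correct. Note that the paper states this proposition without proof (it is imported from \cite{5}), and your argument --- reducing everything to the portmanteau inequality $\liminf_{n}\mu_n(U)\geq\mu(U)$ for open $U$, proved by approximating $\mathbf{1}_{U}$ from below by the Lipschitz functions $\min\{1,\,j\,d(\cdot,U^c)\}$, and then locating points of $supp(\mu_n)$ in the balls $B(x,1/k)$ --- is precisely the standard one from that reference. The only step worth spelling out is that ``$B(x,r)\cap supp(\mu_n)=\varnothing$ implies $\mu_n(B(x,r))=0$'' relies on the separability of $X$ (a Lindel\"of covering of $(supp(\mu_n))^c$ by $\mu_n$-null open sets), which is the same fact you already invoke for the nonemptiness of the support, so this is a presentational point rather than a gap.
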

	\noindent
	The following theorem is a useful characterization of relatively compact sets with respect to narrow topology.
	\begin{theorem}{(Prokhorov's Theorem })\label{tp}
	If a set $\mathcal{K} \subset \mathcal{P}(X)$ is tight, i.e.
	\begin{equation*}
	\forall \epsilon>0 \ \exists \ K_\epsilon \ compact \ in \ X \ such \ that \ \widehat \eta (K_\epsilon)\geq 1-\epsilon \ \ \forall \widehat \eta \in \mathcal{K},
	\end{equation*}
	then $\mathcal{K}$ is relatively compact in $\mathcal {P}(X)$ with respect to narrow topology. Conversely, if $X$ is a separable complete metric space then every relatively compact subset of $\mathcal {P}(X)$ is tight.
	\end{theorem}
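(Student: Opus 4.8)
The plan is to prove the two implications separately, since they rely on quite different tools and only the converse will use completeness of $X$. For the sufficiency part (tightness implies relative compactness), I note first that, since $X$ is separable metric, the narrow topology on $\mathcal{P}(X)$ is metrizable, so it suffices to show that every sequence $(\mu_n)\subset\mathcal{K}$ admits a subsequence converging narrowly to some element of $\mathcal{P}(X)$. I would embed $X$ homeomorphically into a compact metrizable space $\widehat{X}$ (for instance the Hilbert cube $[0,1]^{\mathbb{N}}$) through a map $\iota$, and push each $\mu_n$ forward to $\iota\sharp\mu_n\in\mathcal{P}(\widehat{X})$. Because $\widehat{X}$ is compact, $\mathcal{P}(\widehat{X})$ is narrowly compact: via the Riesz representation theorem it is identified with a weak-$*$ closed subset of the unit ball of $C(\widehat{X})^{*}$, which is weak-$*$ compact by Banach--Alaoglu. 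Hence some subsequence $\iota\sharp\mu_{n_k}$ converges narrowly to a measure $\nu\in\mathcal{P}(\widehat{X})$. The role of tightness is to prevent mass from escaping to $\widehat{X}\setminus\iota(X)$: taking the tight compacts $K_j\subset X$ with $\mu(K_j)\geq 1-1/j$ for all $\mu\in\mathcal{K}$, each $\iota(K_j)$ is compact, hence closed, in $\widehat{X}$, so the portmanteau inequality for closed sets gives $\nu(\iota(K_j))\geq\limsup_k \iota\sharp\mu_{n_k}(\iota(K_j))=\limsup_k\mu_{n_k}(K_j)\geq 1-1/j$. Letting $j\to\infty$ yields $\nu(\iota(X))=1$, so I can pull $\nu$ back to $\mu:=(\iota^{-1})\sharp\nu\in\mathcal{P}(X)$ and verify, by testing against bounded continuous functions, that $\mu_{n_k}\to\mu$ narrowly.

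For the converse (relative compactness implies tightness, assuming $X$ separable and complete), the core is a uniform covering estimate: for every $n\in\mathbb{N}$ and every $\delta>0$ there is a finite union $A$ of balls of radius $1/n$ with $\mu(A)>1-\delta$ for all $\mu\in\mathcal{K}$. I would prove this by contradiction using a countable cover $X=\bigcup_i B_i$ by $1/n$-balls centred at a dense set: if no finite subunion worked uniformly, I could choose $\mu_k\in\mathcal{K}$ with $\mu_k\big(\bigcup_{i\le k}B_i\big)\le 1-\delta$, extract by relative compactness a subsequence converging narrowly to some $\mu$, and apply the portmanteau inequality for the open sets $\bigcup_{i\le m}B_i$ to obtain $\mu\big(\bigcup_{i\le m}B_i\big)\le\liminf_k\mu_k\big(\bigcup_{i\le m}B_i\big)\le 1-\delta$ for every $m$; letting $m\to\infty$ would force $\mu(X)\le 1-\delta<1$, contradicting that $\mu$ is a probability measure. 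Applying the claim with $\delta=\epsilon/2^{n}$ produces sets $A_n$, and I would set $K_\epsilon:=\overline{\bigcap_n A_n}$. Completeness enters here: $\bigcap_n A_n$ is totally bounded, since for each $n$ it lies in a finite union of $1/n$-balls, so its closure is totally bounded and complete, hence compact; and $\mu(K_\epsilon)\ge\mu\big(\bigcap_n A_n\big)\ge 1-\sum_n\epsilon/2^{n}=1-\epsilon$ for all $\mu\in\mathcal{K}$.

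I expect the main obstacle to be the converse direction, and specifically the uniform covering claim. The contradiction argument must be arranged so that the diagonal-type choice of the $\mu_k$ interacts correctly with the portmanteau inequality, which under narrow convergence controls open sets only from below; and the final assembly of $K_\epsilon$ depends crucially on completeness to upgrade ``totally bounded and closed'' to ``compact'', an implication that genuinely fails without completeness. This is precisely why that hypothesis is imposed only in the second half of the statement, while the sufficiency part holds for arbitrary separable metric $X$.
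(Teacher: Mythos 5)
The paper does not actually prove this statement: Prokhorov's theorem is quoted in the preliminaries explicitly ``without proof,'' with a pointer to the reference of Ambrosio--Gigli--Savar\'e, so there is no internal argument to compare yours against. Your proposal is the standard textbook proof and is essentially correct. The converse direction is complete: the uniform finite-covering claim by contradiction, the portmanteau inequality for open sets (which indeed only bounds open sets from below, exactly as you use it), and the assembly of $K_\epsilon$ as a closed totally bounded, hence compact, set correctly isolate where completeness of $X$ is needed. In the direct implication the one step you gloss over is the final transfer of narrow convergence from $\widehat X$ back to $X$: for $f\in C^0_b(X)$ the function $f\circ\iota^{-1}$ is defined only on $\iota(X)$ and need not extend continuously to the Hilbert cube, so you cannot test $\iota\sharp\mu_{n_k}\to\nu$ against it directly. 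The standard repair uses tightness once more: extend $f\circ\iota^{-1}$ from the compact set $\iota(K_j)$ to all of $\widehat X$ by Tietze with the same sup bound, and absorb an error of order $\|f\|_\infty/j$ uniformly in $k$. Relatedly, $\iota(X)$ need not be a Borel subset of $\widehat X$ when $X$ is not complete; what your estimate actually establishes is that $\nu$ is concentrated on the $\sigma$-compact set $\bigcup_j\iota(K_j)\subseteq\iota(X)$, which is all you need to define the pull-back measure. These are fillable details rather than flaws in the approach.
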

Let $X$ be a separable metric space. We recall that $X$ is a Radon space if every Borel probability measure $\mu\in \mathcal{P}(X)$ satisfies
	\begin{equation*}
	\forall B\in\mathscr{B}(X), \forall \epsilon>0, \ \exists K_\epsilon \ \mbox{compact}\ \mbox{with}\  K_\epsilon\Subset B \ \mbox{such that}\ \mu(B\setminus K_\epsilon)\leq \epsilon.
	\end{equation*}
Let us denote by $d$ the distance on $X$ and, for $p\in[1,+\infty)$, by $\mathcal{P}_p(X)$ the set of probability measures $m$ on $X$ such that
	\begin{equation*}
	\int_X d^p(x_0,x)\,dm(x) <+\infty, \ \ \ \ \forall x_0 \in X.
	\end{equation*}
	The \textit{Monge-Kantorowich distance} on $\mathcal{P}_p(X)$ is given by
	\begin{equation}\label{dis1}
	d_p(m,m')=\inf_{\lambda \in\Pi(m,m')}\Big[\int_{X^2}d(x,y)^p\,d\lambda(x,y) \Big ]^{1/p},
	\end{equation}
	where $\Pi(m,m')$ is the set of Borel probability measures on $X\times X$ such that $\lambda(A\times \mathbb{R}^n)=m(A)$ and $\lambda(\mathbb{R}^n\times A)=m'(A)$ for any Borel set $A\subset X$. In the particular case when $p=1$, the distance $d_p$ takes the name of Kantorovich-Rubinstein distance and the following formula holds
	\begin{equation}
	d_1(m,m')=sup\Big\{\int_X f(x)\,dm(x)-\int_X f(x)\,dm'(x) \ |\ f:X\rightarrow\mathbb{R} \ \ \mbox{is 1-Lipschitz}\Big\}, 
	\end{equation}
	for all $m$, $m'\in\mathcal{P}_1(X)$.
	In the next result, we recall the relationship between the weak-$\ast$ convergence of measures and convergence with respect to $d_p$. 
	\begin{proposition}\label{cm}
	If a sequence of measures $\{\mu_n\}_{n\geq 1} \subset\mathcal{P}_p(X)$ converges to $\mu$ for $d_p$, then $\{\mu_n\}_{n\geq 1}$ weakly converges to $\mu$.
	"Conversely", if $\mu_n$ is concentrated on a fixed compact subset of $X$ for all $n\geq 1$ and $\{\mu_n\}_{n\geq 1}$ weakly converges to $\mu$, then the $\{\mu_n\}_{n\geq 1}$ converges to $\mu$ in $d_p$.
	\end{proposition}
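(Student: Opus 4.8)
The plan is to treat the two implications separately, using the Kantorovich--Rubinstein duality for $d_1$ as the bridge between the distance and integration against test functions, together with elementary comparisons between $d_1$ and $d_p$.

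For the first implication I would begin by recording two facts. First, $\mathcal{P}_p(X)\subseteq\mathcal{P}_1(X)$ for $p\geq 1$, since $d\leq 1+d^p$, so that the duality formula applies to all measures at hand. Second, $d_1\leq d_p$: by Jensen's inequality applied to $t\mapsto t^p$ one has $\int d\,d\lambda\leq(\int d^p\,d\lambda)^{1/p}$ for every $\lambda\in\Pi(\mu_n,\mu)$, and evaluating along couplings nearly optimal for $d_p$ gives $d_1\leq d_p$. Hence $d_p(\mu_n,\mu)\to 0$ forces $d_1(\mu_n,\mu)\to 0$, and for any bounded Lipschitz $f$ with constant $L$ the duality formula applied to $f/L$ yields $|\int f\,d\mu_n-\int f\,d\mu|\leq L\,d_1(\mu_n,\mu)\to 0$. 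Since weak (narrow) convergence is characterised by convergence of integrals against bounded Lipschitz functions (Portmanteau theorem), this proves $\mu_n\to\mu$ weakly.

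For the converse, let $K\Subset X$ be the fixed compact set carrying every $\mu_n$ and put $D=\mathrm{diam}(K)<+\infty$. Since $K$ is closed, narrow convergence gives $\mu(K)\geq\limsup_n\mu_n(K)=1$, so $\mu$ is supported in $K$ as well. The key reduction is the inequality $d_p(\mu,\nu)^p\leq D^{p-1}d_1(\mu,\nu)$ for measures supported in $K$: every $\lambda\in\Pi(\mu,\nu)$ lives on $K\times K$, where $d\leq D$, so that $\int d^p\,d\lambda\leq D^{p-1}\int d\,d\lambda$, and evaluating along couplings nearly optimal for $d_1$ yields the claim. It therefore suffices to prove $d_1(\mu_n,\mu)\to 0$.

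To this end I would combine the duality formula with an Arzel\`a--Ascoli argument. Fixing $x_0\in K$ and subtracting constants (which leaves $\int f\,d\mu_n-\int f\,d\mu$ unchanged), the supremum in the duality formula may be restricted to the family $\mathcal{F}$ of $1$-Lipschitz functions $f$ on $K$ with $f(x_0)=0$; being uniformly bounded by $D$ and equicontinuous, $\mathcal{F}$ is totally bounded in $(C(K),\|\cdot\|_\infty)$. Given $\epsilon>0$, pick a finite $\epsilon/3$-net $f_1,\dots,f_N$ of $\mathcal{F}$; comparing an arbitrary $f\in\mathcal{F}$ with its nearest $f_i$ gives $\int f\,d\mu_n-\int f\,d\mu\leq(\int f_i\,d\mu_n-\int f_i\,d\mu)+2\epsilon/3$, and weak convergence makes the finitely many quantities $\int f_i\,d\mu_n-\int f_i\,d\mu$ smaller than $\epsilon/3$ for $n$ large. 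Taking the supremum over $\mathcal{F}$ yields $d_1(\mu_n,\mu)\leq\epsilon$ eventually. The main obstacle is exactly this step: in the duality formula the optimal test function depends on $n$, so narrow convergence against each fixed function does not immediately suffice; it is the compactness of the class of normalised $1$-Lipschitz functions---which relies essentially on the compactness of $K$---that upgrades pointwise to uniform control and closes the argument.
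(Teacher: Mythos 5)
The paper itself gives no proof of this proposition: it appears in the preliminaries, which are explicitly ``introduce[d] without proof'' with a reference to Ambrosio--Gigli--Savar\'e, so there is no in-paper argument to compare against. Your proof is correct and complete. The standard reference establishes a sharper general statement (convergence in $d_p$ is equivalent to narrow convergence together with convergence of $p$-th moments); your argument instead exploits the compact-support hypothesis directly, which is all that is needed here, and replaces that machinery with the elementary two-sided comparison $d_1\leq d_p$ (Jensen) and $d_p^p\leq D^{p-1}d_1$ on a set of diameter $D$, reducing everything to $d_1$, and then with an Arzel\`a--Ascoli total-boundedness argument for the normalised $1$-Lipschitz class to upgrade narrow convergence to convergence in the dual norm. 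You correctly identify the one genuinely nontrivial step --- that the optimal Kantorovich--Rubinstein test function depends on $n$, so testing against fixed functions is not enough --- and the $\epsilon/3$-net argument closes it. Two minor points you could make explicit: restricting the dual supremum to $1$-Lipschitz functions on $K$ is legitimate because both measures charge only $K$ and every $1$-Lipschitz function on $K$ extends to a $1$-Lipschitz function on $X$ (McShane extension), so the two suprema coincide; and in the first implication the bound $|\int f\,d\mu_n-\int f\,d\mu|\leq L\,d_1(\mu_n,\mu)$ only uses the easy (``$\geq$'') half of the duality formula, namely $\int f\,dm-\int f\,dm'=\int (f(x)-f(y))\,d\lambda\leq\int d\,d\lambda$ for any coupling $\lambda$, so no deep duality theorem is actually required.
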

Given separable metric spaces $X_1$ and $X_2$ and a Borel map $f: X_1 \rightarrow X_2$, we recall that the push-forward of a measure $\mu \in \mathcal{P}(X_1)$ through $f$ is defined by
		\begin{equation}
		f \sharp \mu(B):=\mu(f^{-1}(B)) \ \ \ \forall B \in \mathscr{B}(X_2).
		\end{equation} 
	\noindent
	The push-forward is characterized by the fact that
	\begin{equation}
	\int_{X_1}g(f(x))\,d\mu(x)=\int_{X_2}g(y)\,df\sharp\mu(y)
	\end{equation}
	for every Borel function $g:X_2 \rightarrow\mathbb{R}$.\\
We conclude this preliminary session by recalling the disintegration theorem.
	\begin{theorem}\label{td}
		Let $X$, $Y$ be Radon separable metric spaces, $\mu \in \mathcal{P}(X)$, let $\pi: X \rightarrow Y$ be a Borel map and let $\eta=\pi \sharp \mu \in \mathcal{P}(Y)$. Then there exists an $\eta$-a.e. uniquely determined Borel measurable family \footnote{We say that $\{\mu_y\}_{y\in Y}$ is a Borel family (of probability measures) if $y\in Y\longmapsto \mu_y(B)\in \mathbb{R}$ is Borel for any Borel set $B\subset X$.} of probabilities $\{\mu_y\}_{y\in Y}\subset \mathcal{P}(X)$ such that
		\begin{equation}
		\mu_y(X\setminus\pi^{-1}(y))=0 \ \ \mbox{for $\eta$-a.e.} \ y \in Y
		\end{equation}
		and
		\begin{equation}
		\int_{X} f(x)\,d\mu(x)=\int_Y\Big(\int_{\pi^{-1}(y)}f(x)\,d\mu_y(x) \Big )\,d\eta(y)
		\end{equation}
		for every Borel map $f:X \rightarrow [0,+\infty]$.
	\end{theorem}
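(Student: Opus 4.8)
The plan is to manufacture the conditional measures $\mu_y$ out of Radon--Nikodym densities and then to upgrade the resulting $\eta$-a.e.\ defined, merely finitely additive object into a genuine Borel probability measure by exploiting the Radon property of $X$. First I would associate to each Borel set $B\subset X$ the finite measure $\nu_B$ on $Y$ given by $\nu_B(A)=\mu\big(B\cap\pi^{-1}(A)\big)$ for $A\in\mathscr{B}(Y)$. Since $\eta=\pi\sharp\mu$, one has $\nu_B\ll\eta$ (if $\eta(A)=0$ then $\mu(\pi^{-1}(A))=0$, hence $\nu_B(A)=0$) and $\nu_X=\eta$, so the Radon--Nikodym theorem provides a Borel density $h_B=\tfrac{d\nu_B}{d\eta}$ with $0\le h_B\le1$ $\eta$-a.e.; the tentative definition is $\mu_y(B):=h_B(y)$. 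With this choice the integral identity for $f=\mathbf 1_B$ is exactly the defining property of $h_B$, so the real task is to show that, for $\eta$-a.e.\ fixed $y$, the set function $B\mapsto h_B(y)$ is a probability measure concentrated on $\pi^{-1}(y)$.

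Using the separability of $X$, I would next fix a countable algebra $\mathcal A\subset\mathscr{B}(X)$ generating $\mathscr{B}(X)$. Finite additivity, monotonicity and the normalisation $h_X=1$ are relations among countably many functions in $L^1(\eta)$, hence they hold simultaneously for $y$ outside a single $\eta$-null set. For such $y$ the map $B\mapsto h_B(y)$ is therefore a finitely additive $[0,1]$-valued set function on $\mathcal A$ of total mass $1$.

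The crux, and the step I expect to be the main obstacle, is to promote this finitely additive set function into a countably additive one, i.e.\ to verify continuity at $\emptyset$. This is exactly where the Radon hypothesis is essential: each $\nu_B$ is inner regular by compact sets, and a tightness argument based on this regularity shows that $h_{B_k}(y)\to0$ whenever $B_k\downarrow\emptyset$ in $\mathcal A$, for $y$ outside a further $\eta$-null set. An application of the Carath\'eodory--Kolmogorov extension theorem then produces, for $\eta$-a.e.\ $y$, a unique Borel probability measure $\mu_y$ on $X$ with $\mu_y(B)=h_B(y)$ for all $B\in\mathcal A$.

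It remains to verify concentration on the fibres, the integral identity, and uniqueness. Applying the construction to the sets $\pi^{-1}(A)$, with $A$ ranging over a countable generating algebra of $\mathscr{B}(Y)$, gives $\mu_y(\pi^{-1}(A))=\mathbf 1_A(y)$ for $\eta$-a.e.\ $y$, whence $\mu_y\big(X\setminus\pi^{-1}(y)\big)=0$ off an $\eta$-null set. The integral identity passes from indicators to simple functions by linearity and to nonnegative Borel $f$ by monotone convergence, the same monotone-class argument yielding Borel measurability of $y\mapsto\int f\,d\mu_y$. Uniqueness is then soft: if $\{\mu'_y\}$ is another disintegration, then for each fixed Borel $B$ both $y\mapsto\mu_y(B)$ and $y\mapsto\mu'_y(B)$ are densities of $\nu_B$ with respect to $\eta$, hence agree $\eta$-a.e.; letting $B$ run through $\mathcal A$ and taking the union of the exceptional sets gives $\mu_y=\mu'_y$ for $\eta$-a.e.\ $y$.
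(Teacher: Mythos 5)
The paper offers no proof of this statement to compare against: Theorem \ref{td} sits in the preliminaries, which the authors explicitly introduce ``without proof'' with a pointer to \cite{5}, so your argument can only be measured against the standard one in that reference --- which is essentially what you have reproduced. Your route (densities $h_B=d\nu_B/d\eta$ of $\nu_B(A)=\mu(B\cap\pi^{-1}(A))$, a countable generating algebra $\mathcal A$ to collect the exceptional null sets, an upgrade from finite to countable additivity using the Radon property, Carath\'eodory extension, and a monotone-class pass to general Borel $f$, fibre concentration and uniqueness) is correct in structure, and the endgame steps are all sound as written.

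The one place where what you wrote does not yet constitute a proof is the step you yourself identify as the crux. For a \emph{single} sequence $B_k\downarrow\emptyset$ in $\mathcal A$ the convergence $h_{B_k}(y)\to 0$ $\eta$-a.e.\ is immediate (the $h_{B_k}$ decrease a.e.\ and $\int_Y h_{B_k}\,d\eta=\mu(B_k)\to0$) and needs no regularity at all; the difficulty is that there are uncountably many such sequences, so you cannot discard a null set per sequence, and ``inner regularity of $\nu_B$'' (a measure on $Y$) is not the relevant tool. The standard fix is to use tightness of $\mu$ on $X$: choose compacts $K_n\uparrow$ with $\mu(K_n)\to1$, enlarge $\mathcal A$ to a countable algebra together with a \emph{countable} compact class $\mathcal K$ (e.g.\ sets $\overline{A}\cap K_n$) such that every $A\in\mathcal A$ is inner-approximated in $\mu$-measure by members of $\mathcal K$; then the countably many approximation inequalities transfer to the densities off a single null set, $\sup_n h_{K_n}=1$ $\eta$-a.e.\ because $\int_Y h_{K_n}\,d\eta=\mu(K_n)$, and Marczewski's compact-class criterion yields countable additivity of $B\mapsto h_B(y)$ on $\mathcal A$ for $\eta$-a.e.\ $y$. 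With that step made precise your sketch closes; everything after it (concentration on fibres via a countable generating algebra of $\mathscr B(Y)$, monotone convergence, and the Radon--Nikodym uniqueness of the $h_B$) is fine.
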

\section{Constrained MFG equilibria} 
\subsection{Aproximation of constrained trajectories}
Let $\Omega$ be a bounded open subset of $\mathbb{R}^n$ with $\mathcal{C}^2$ boundary. Let $\Gamma$ be the metric subspace of $AC(0,T;\mathbb{R}^n)$ defined by
\begin{equation*}
\Gamma=\Big\{\gamma\in AC(0,T;\mathbb{R}^n): \gamma(t)\in\overline{\Omega}, \ \ \ \forall t\in[0,T] \Big\}.
\end{equation*}
For any $x\in\overline{\Omega}$, we set 
\begin{equation}
\Gamma[x]=\left\{ \gamma\in\Gamma :\gamma(0)=x\right\}.
\end{equation}
\begin{lemma}\label{lemmad}
	Let $\gamma\in AC(0,T;\mathbb{R}^n)$ and suppose that $d_\Omega(\gamma(t))<\rho_0$ for all $t \in[0,T]$. Then $d_\Omega \circ\gamma\in AC(0,T)$ and
	\begin{equation}\label{fderivata}
	\frac{d}{dt}(d_\Omega\circ \gamma)(t)=\big\langle Db_\Omega(\gamma(t)),\dot{\gamma}(t)\big\rangle\mathbf{1}_{\Omega^c}(\gamma(t)) \ \ \text{a.e.}\ t\in[0,T].
	\end{equation}
	Moreover, 
	\begin{equation}\label{ng}
	N_\gamma:=\left\{t\in[0,T]:\gamma(t)\in\partial\Omega,\ \exists\ \dot{\gamma}(t),\ \langle Db_\Omega(\gamma(t)),\dot{\gamma}(t)\rangle\neq 0 \right\}
	\end{equation}
	is a discrete set.
\end{lemma}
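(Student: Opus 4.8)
The plan is to reduce everything to the signed distance $b_\Omega$, which is $C^2$ on the tubular neighbourhood $\partial\Omega+B_{\rho_0}$, exploiting the elementary identity $d_\Omega=b_\Omega^+:=\max\{b_\Omega,0\}$ (indeed $b_\Omega>0$ off $\overline\Omega$, $b_\Omega=0$ on $\partial\Omega$, and $b_\Omega<0$ on $\Omega$, while $d_\Omega$ vanishes on $\overline\Omega$ and equals $b_\Omega$ outside it). Since $d_\Omega$ is $1$-Lipschitz and $\gamma$ is absolutely continuous, the composition $d_\Omega\circ\gamma$ is absolutely continuous; this gives the first assertion for free, and it remains to identify its derivative a.e.

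For the derivative formula I would split $[0,T]$ into the three measurable sets $A=\{t:\gamma(t)\in\Omega\}$ and $B=\{t:\gamma(t)\notin\overline\Omega\}$ (both open, by continuity of $\gamma$) together with the closed set $C=\{t:\gamma(t)\in\partial\Omega\}$. On $A$ we have $d_\Omega\circ\gamma\equiv 0$ and $\mathbf 1_{\Omega^c}(\gamma)=0$, so both sides vanish. On $B$ the hypothesis $d_\Omega(\gamma(t))<\rho_0$ forces $\gamma(t)\in\partial\Omega+B_{\rho_0}$ (the nearest point of $\overline\Omega$ lies on $\partial\Omega$), where $d_\Omega=b_\Omega$ is $C^2$; the classical chain rule for a $C^1$ function composed with an AC curve then yields $\tfrac{d}{dt}(d_\Omega\circ\gamma)=\langle Db_\Omega(\gamma),\dot\gamma\rangle$, matching the right-hand side since $\mathbf 1_{\Omega^c}(\gamma)=1$ there. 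The only delicate set is $C$. Here I would invoke the standard fact that the derivative of an absolutely continuous function vanishes a.e. on any of its level sets (at a.e. point of a level set, which is a density point, the difference quotient taken along points of the set is zero). Applying this to $d_\Omega\circ\gamma$ on $\{d_\Omega\circ\gamma=0\}\supseteq C$ shows the left-hand side is $0$ a.e. on $C$; applying it to $b_\Omega\circ\gamma$ — which is AC near $C$ because $\gamma(C)\subseteq\partial\Omega\subseteq\partial\Omega+B_{\rho_0}$ — on $\{b_\Omega\circ\gamma=0\}\supseteq C$, together with the chain rule, shows $\langle Db_\Omega(\gamma),\dot\gamma\rangle=0$ a.e. on $C$, so the right-hand side vanishes too. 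This closes the a.e. identity on $A\cup B\cup C=[0,T]$.

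For the discreteness of $N_\gamma$, fix $t_0\in N_\gamma$, so that $\gamma(t_0)\in\partial\Omega$, $\dot\gamma(t_0)$ exists, and $\langle Db_\Omega(\gamma(t_0)),\dot\gamma(t_0)\rangle\neq 0$. Set $\beta(t)=b_\Omega(\gamma(t))$, which by continuity of $\gamma$ stays in the smooth tube for $t$ near $t_0$. The pointwise chain rule gives that $\beta$ is differentiable at $t_0$ with $\beta(t_0)=0$ and $\beta'(t_0)=\langle Db_\Omega(\gamma(t_0)),\dot\gamma(t_0)\rangle\neq 0$; hence $\beta(t)=\beta'(t_0)(t-t_0)+o(|t-t_0|)$ is nonzero for $0<|t-t_0|<\delta$. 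Since $\gamma(t)\in\partial\Omega$ is equivalent to $\beta(t)=0$, the point $t_0$ is isolated in $C=\{\beta=0\}$, hence isolated in $N_\gamma\subseteq C$. As $t_0\in N_\gamma$ was arbitrary, every point of $N_\gamma$ is isolated, i.e. $N_\gamma$ is discrete.

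The main obstacle, and the only genuinely non-routine point, is the behaviour on $C$: one cannot naively apply the chain rule because $d_\Omega$ is not differentiable on $\partial\Omega$. The level-set/density-point argument is what circumvents this, simultaneously forcing both sides of the derivative formula to vanish on $C$ and — via the nondegeneracy $\beta'(t_0)\neq 0$ — making the zeros of $\beta$ isolated, which is exactly the discreteness of $N_\gamma$.
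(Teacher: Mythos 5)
Your proof is correct, and while it uses the same basic decomposition of $[0,T]$ as the paper (interior, exterior, boundary), the treatment of the boundary set is genuinely different. On $\{t:\gamma(t)\in\Omega\}$ and $\{t:\gamma(t)\notin\overline\Omega\}$ the two arguments coincide (locally zero, respectively the chain rule with $d_\Omega=b_\Omega$ on the tubular neighbourhood). On $C=\{t:\gamma(t)\in\partial\Omega\}$ the paper restricts to $D_1\setminus N_\gamma$ and estimates the difference quotient directly via $0\le d_\Omega(\gamma(t+h))\le |b_\Omega(\gamma(t)+h\dot\gamma(t))|+o(h)\le |h|\,|\langle Db_\Omega(\gamma(t)),\dot\gamma(t)\rangle|+o(h)$, so its proof of \eqref{fderivata} on the boundary set depends on first knowing that $N_\gamma$ is discrete (hence null). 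You instead invoke the standard density-point lemma that an absolutely continuous function has vanishing derivative a.e.\ on each of its level sets, applied to both $d_\Omega\circ\gamma$ and $b_\Omega\circ\gamma$; this gives \eqref{fderivata} a.e.\ on $C$ with no reference to $N_\gamma$ at all, at the price of importing that lemma rather than arguing from first principles. Your discreteness argument for $N_\gamma$ is the same idea as the paper's but more complete: the paper simply asserts the existence of a punctured neighbourhood of $t$ in which $\gamma$ avoids $\partial\Omega$, whereas you justify it by the expansion $\beta(t)=\beta'(t_0)(t-t_0)+o(|t-t_0|)$ with $\beta'(t_0)=\langle Db_\Omega(\gamma(t_0)),\dot\gamma(t_0)\rangle\neq 0$. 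One cosmetic point: $b_\Omega\circ\gamma$ is absolutely continuous on all of $[0,T]$ because $b_\Omega$ is globally Lipschitz, so your restriction to ``near $C$'' is unnecessary.
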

\begin{proof}
	First we prove that $N_\gamma$ is a discrete set. Let $t\in N_\gamma$, then there exists $\epsilon>0$ such that $\gamma(s)\notin \partial \Omega$ for any $s\in\left(]t-\epsilon,t+\epsilon[\ \setminus \{t\}\right)\cap[0,T]$. Therefore, $N_\gamma$ is composed of isolated points and so it is a discrete set.\\
	Let us now set $\phi(t)=(d_\Omega\circ \gamma)(t)$ for all $t\in[0,T]$.
	We note that $\phi \in AC(0,T)$ because it is the composition of $\gamma\in AC(0,T;\mathbb{R}^n)$ with the Lipschitz continuous function $d_\Omega(\cdot)$. Denote by $D$ the set of $t\in[0,T]$ such that there exists the first order derivative of $\gamma$ in $t$, i.e.,
	\begin{equation*}
	D=\left\{t\in[0,T]: \ \exists \ \dot{\gamma}(t)\ \right\}.
	\end{equation*}
	We observe that $D$ has full Lebesgue measure and we decompose $D$ in the following way:
	\begin{equation*}
	D=\underbrace{\left\{ t\in D :\gamma(t)\notin \partial\Omega\right\}}_{D_0}\cup \underbrace{\left\{ t\in D :\gamma(t)\in \partial\Omega\right\}}_{D_1}.
	\end{equation*}
	By \cite[Theorem 4, p. 129]{evans}, for all $t\in D_0$ the first order derivative of $\phi$ is equal to
	\begin{align*}
	\dot{\phi}(t)=
	\begin{cases}
	0 \ \ \ \ \ &\gamma(t)\in\Omega\\
	\big\langle Db_\Omega(\gamma(t)),\dot{\gamma}(t)\big\rangle &\gamma(t)\in \mathbb{R}^n\setminus\Omega.
	\end{cases}
	\end{align*} 
	Now, consider $t\in D_1\setminus N_\gamma$. 
	Since $\gamma(t)\in\partial\Omega$, one has that
	\begin{equation*}
	\frac{\phi(t+h)-\phi(t)}{h}=\frac{d_\Omega(\gamma(t+h))}{h},
	\end{equation*}
	for all $h>0$.
	Since $\gamma(t+h)=\gamma(t)+h\dot{\gamma}(t)+o(h)$ and $d_\Omega$ is Lipschitz continuous, we obtain
	\begin{eqnarray*}
		0\leq \frac{d_\Omega(\gamma(t+h))}{h}  \leq \frac{o(h)}{h}+\frac{d_\Omega(\gamma(t)+h\dot{\gamma}(t))}{h}.
	\end{eqnarray*}
	Hence, one has that
	\begin{equation}\label{limd}
	0\leq\liminf_{h\rightarrow 0}\frac{d_\Omega(\gamma(t+h))}{h}\leq\limsup_{h\rightarrow 0}\frac{d_\Omega(\gamma(t+h))}{h}\leq\limsup_{h\rightarrow 0}\frac{d_\Omega(\gamma(t)+h\dot{\gamma}(t))}{h}.
	\end{equation}
Moreover, by the regularity of $b_\Omega$, we obtain
	\begin{align}\label{dis}
	d_\Omega(\gamma(t)+h\dot{\gamma}(t))\leq |b_\Omega(\gamma(t)+h\dot{\gamma}(t))| \leq |h|\ |\langle Db_\Omega(\gamma(t)),\dot{\gamma}(t)\rangle| +o(h).
	\end{align}
	Thus, since $t\in D\setminus N_\gamma$, we conclude that
	\begin{equation}
	\limsup_{h\rightarrow 0}\frac{d_\Omega(\gamma(t)+h\dot{\gamma}(t))}{h}\leq\left |Db_\Omega(\gamma(t)),\dot{\gamma}(t)\rangle\right|=0.
	\end{equation}
	So $\dot{\phi}(t)=0$ and the proof is complete.
\end{proof}
\begin{proposition}\label{aprt}
	Let $x_i\in\overline{\Omega}$ be such that $x_i\rightarrow x$ and let $\gamma \in \Gamma[x]$. Then there exists ${\gamma}_i \in \Gamma[x_i]$ such that:
	\begin{enumerate}
		\item[(i)] ${\gamma}_i\rightarrow \gamma$ uniformly on $[0,T]$;
		\item[(ii)] $\dot{\gamma}_i\rightarrow \dot{\gamma}$ a.e. on $[0,T]$;
		\item[(iii)] $|\dot{\gamma}_i(t)|\leq C|\dot{\gamma}(t)|$ for any $i\geq1$, a.e. $t\in[0,T]$, and some constant $C\geq 0$.
	\end{enumerate}
\end{proposition}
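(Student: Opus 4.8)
The plan is to build $\gamma_i$ by translating $\gamma$ so that it issues from $x_i$ and then projecting the translated curve back onto $\overline\Omega$, exploiting the $\mathcal C^2$ regularity of $\partial\Omega$ to control the projection and Lemma~\ref{lemmad} to govern the behaviour on $\partial\Omega$. Concretely, I would first set $\tilde\gamma_i(t)=\gamma(t)+(x_i-x)$. Since $\gamma(t)\in\overline\Omega$, one has $d_\Omega(\tilde\gamma_i(t))\le|x_i-x|$, so for $i$ large enough $\tilde\gamma_i$ takes values in the tubular neighbourhood $\partial\Omega+B_{\rho_0}$ together with the interior of $\overline\Omega$, where the metric projection $\Phi$ onto $\overline\Omega$ is well defined and given by the single formula $\Phi(y)=y-d_\Omega(y)\,Db_\Omega(y)$ (this reduces to $\Phi(y)=y$ on $\overline\Omega$). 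I then define $\gamma_i=\Phi\circ\tilde\gamma_i$. By construction $\gamma_i(t)\in\overline\Omega$ for all $t$ and $\gamma_i(0)=\Phi(x_i)=x_i$, so $\gamma_i\in\Gamma[x_i]$; moreover $\gamma_i$ is absolutely continuous, being the composition of the Lipschitz map $\Phi$ with $\tilde\gamma_i\in AC(0,T;\mathbb{R}^n)$. Claim (i) is then immediate, as $|\gamma_i(t)-\gamma(t)|\le|x_i-x|+d_\Omega(\tilde\gamma_i(t))\le 2|x_i-x|\to0$ uniformly in $t$.

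The core of the argument is to differentiate $\gamma_i$. Applying Lemma~\ref{lemmad} to $\tilde\gamma_i$ (legitimate for $i$ large), the scalar $t\mapsto d_\Omega(\tilde\gamma_i(t))$ is absolutely continuous with derivative $\langle Db_\Omega(\tilde\gamma_i(t)),\dot\gamma(t)\rangle\mathbf{1}_{\Omega^c}(\tilde\gamma_i(t))$, while $t\mapsto Db_\Omega(\tilde\gamma_i(t))$ is absolutely continuous with derivative $D^2b_\Omega(\tilde\gamma_i(t))\dot\gamma(t)$ by the $\mathcal C^2$ regularity of $b_\Omega$. The Leibniz rule then yields, for a.e. $t$,
$$\dot\gamma_i(t)=\dot\gamma(t)-\langle Db_\Omega(\tilde\gamma_i(t)),\dot\gamma(t)\rangle\,\mathbf{1}_{\Omega^c}(\tilde\gamma_i(t))\,Db_\Omega(\tilde\gamma_i(t))-d_\Omega(\tilde\gamma_i(t))\,D^2b_\Omega(\tilde\gamma_i(t))\dot\gamma(t).$$
Since $Db_\Omega$ and $D^2b_\Omega$ are bounded on $\partial\Omega+B_{\rho_0}$ and $d_\Omega(\tilde\gamma_i(t))\le|x_i-x|$, each term on the right is pointwise bounded by a constant multiple of $|\dot\gamma(t)|$, uniformly in $i$; this gives (iii), with $C$ independent of $i$ and with every term vanishing wherever $\dot\gamma(t)=0$.

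For (ii) I would pass to the limit $i\to\infty$ in the displayed identity at a fixed $t$ where $\dot\gamma(t)$ exists. The last term vanishes in the limit because of the factor $d_\Omega(\tilde\gamma_i(t))\le|x_i-x|\to0$. For the middle term, $\tilde\gamma_i(t)\to\gamma(t)$ and $Db_\Omega$ is continuous; if $\gamma(t)\in\Omega$ then $\mathbf{1}_{\Omega^c}(\tilde\gamma_i(t))$ eventually equals $0$, whereas if $\gamma(t)\in\partial\Omega$ then, for a.e. such $t$, one has $t\notin N_\gamma$ and hence $\langle Db_\Omega(\gamma(t)),\dot\gamma(t)\rangle=0$, so the (uniformly bounded) middle term again tends to $0$. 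In every case $\dot\gamma_i(t)\to\dot\gamma(t)$ for a.e. $t$, which is (ii).

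The step I expect to be the main obstacle is precisely this last passage on the boundary. The projection strips the outward-normal component from the velocity, so a priori $\dot\gamma_i(t)$ could converge to the tangential part of $\dot\gamma(t)$ rather than to $\dot\gamma(t)$ itself, which would destroy (ii). What rescues the argument is exactly Lemma~\ref{lemmad}: the transversal-crossing set $N_\gamma$ is discrete, hence negligible, so $\dot\gamma(t)$ is tangent to $\partial\Omega$ for almost every $t$ with $\gamma(t)\in\partial\Omega$, and the normal correction introduced by $\Phi$ disappears in the limit.
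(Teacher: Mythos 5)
Your proposal is correct and follows essentially the same route as the paper's own proof: translate $\gamma$ by $x_i-x$, project back onto $\overline\Omega$ via $y\mapsto y-d_\Omega(y)Db_\Omega(y)$, differentiate using Lemma~\ref{lemmad}, and kill the normal correction term in the limit by the discreteness of $N_\gamma$. The decomposition of $\dot\gamma_i$ and the case analysis ($\gamma(t)\in\Omega$ versus $\gamma(t)\in\partial\Omega$) match the paper's argument step for step.
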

\begin{proof}
	Let $\hgi$ be the trajectory defined by
	\begin{equation}
	\hgi(t)=\gamma(t)+x_i-x.
	\end{equation}
	We observe that $d_\Omega(\hgi(t))\leq \rho_0$ for all $t \in[0,T]$ and all sufficiently large $i$, say $i\geq i_0$. Indeed,
	 $$
	 d_\Omega(\hgi(t))\leq|\hgi(t)-\gamma(t)|=|x_i-x|.
	 $$
	 Since $x_i\rightarrow x$, we have that $d_\Omega(\hgi(t))\leq \rho_0$ for all $t \in[0,T]$ and $i\geq i_0$. We denote by $\gamma_i$ the projection of $\hgi$ on $\overline{\Omega}$, i.e.,
	\begin{equation}
	\gamma_i(t)=\hgi(t)-d_\Omega(\hgi(t))Db_\Omega(\hgi(t)) \ \ \ \forall t \in[0,T].
	\end{equation}
	We note that $\gamma_i\in \Gamma[x_i]$. Moreover, $\gamma_i$ converges uniformly to $\gamma$ on $[0,T]$. Indeed,
	\begin{equation*}
	|\gamma_i(t)-\gamma(t)|=|x_i-x -d_\Omega(\hgi(t))Db_\Omega(\hgi(t))|\leq 2|x_i-x|, \ \ \ \forall \ t\in[0,T].
	\end{equation*}
	By Lemma \ref{lemmad}, $d_\Omega(\hgi(\cdot)) \in AC(0,T)$ and $\frac{d}{dt}\left(d_\Omega(\hgi(t))\right)=\big\langle Db_\Omega(\hgi(t)),\dot{\widehat{\gamma}}_i(t)\big\rangle \mathbf{1}_{\Omega^c}(\hgi(t))$ a.e. $t\in[0,T]$. Using the regularity of $b_\Omega$, we obtain
	\begin{equation*}\label{deriv}
	\dot{\gamma}_i(t)=\dot{\gamma}(t)-\big\langle Db_\Omega(\hgi(t)),\dot{\gamma}(t)\big\rangle Db_\Omega(\hgi(t))\mathbf{1}_{\Omega^c}(\hgi(t)) - d_\Omega(\hgi(t)) D^2b_\Omega(\hgi(t))\dot{\gamma}(t),   \ \mbox{a.e.} \ t \in [0,T].
	\end{equation*}
	Therefore, there exists a constant $C\geq 0$ such that $|\dot{\gamma}_i(t)|\leq C|\dot{\gamma}(t)|$ for any $i\geq i_0$, a.e. $t\in[0,T]$.\\
Finally, we have to show that $\dot{\gamma}_i\rightarrow \dot{\gamma}$ almost everywhere on $[0,T]$. Since $\hgi\rightarrow \gamma$ and $\gamma\in\Gamma[x]$, one has that
	\begin{equation*}
	d_\Omega(\hgi(t))D^2b_\Omega(\hgi(t))\dot{\gamma}(t)\xrightarrow{i \rightarrow \infty} 0, \ \ \ \forall t\in [0,T].
	\end{equation*}
	So, we have to prove that 
	\begin{equation}\label{3.9}
	-\big\langle Db_\Omega(\hgi(t)),\dot{\gamma}(t)\big\rangle Db_\Omega(\hgi(t))\mathbf{1}_{\Omega^c}(\hgi(t))\xrightarrow{i\rightarrow \infty} 0, \ \mbox{a.e.}\ t \in[0,T].
	\end{equation}
	We note that
	\begin{equation}\label{k12}
	\Big|\big\langle Db_\Omega(\hgi(t)),\dot{\gamma}(t)\big\rangle Db_\Omega(\hgi(t))\mathbf{1}_{\Omega^c}(\hgi(t))\Big|\leq \Big|\big\langle Db_\Omega(\hgi(t)),\dot{\gamma}(t)\big\rangle\Big|, \ \ \mbox{a.e.} \ t \in[0,T].
	\end{equation}
	Fix $t\in[0,T]$ such that \eqref{k12} holds. If $\gamma(t)\in\Omega$ then $\hgi(t)\in \Omega$ for $i$ large enough and \eqref{3.9} holds. On the other hand, if $\gamma(t)\in\partial\Omega$, then passing to the limit in \eqref{k12}, we have that
	\begin{equation*}
	\limsup_{i\rightarrow \infty}\Big|\big\langle Db_\Omega(\hgi(t)),\dot{\gamma}(t)\big\rangle Db_\Omega(\hgi(t))\mathbf{1}_{\Omega^c}(\hgi(t))\Big|\leq\limsup_{i\rightarrow \infty} \Big|\big\langle Db_\Omega(\hgi(t)),\dot{\gamma}(t)\big\rangle\Big|.
	\end{equation*}
	Since $\gamma_i \rightarrow \gamma$ uniformly on $[0,T]$, one has that
	\begin{equation}
	\limsup_{i\rightarrow \infty} \Big|\big\langle Db_\Omega(\hgi(t)),\dot{\gamma}(t)\big\rangle\Big|=\Big|\big\langle Db_\Omega(\gamma(t)),\dot{\gamma}(t)\big\rangle\Big|.
	\end{equation}
	By Lemma \ref{lemmad}, we have that $\langle Db_\Omega(\gamma(t)),\dot{\gamma}(t)\rangle=0$ for $t\in [0,T]\setminus N_\gamma$, where $N_\gamma$ is the discret set defined in \eqref{ng}. So \eqref{3.9} holds for a.e. $t\in[0,T]$. Thus, $\dot{\gamma}_i$ converges almost everywhere to $\dot{\gamma}$ on $[0,T]$. This completes the proof. 
\end{proof}
\subsection{Assumptions}\label{ipotesi}
Let $\Omega$ be a bounded open subset of $\mathbb{R}^n$ with $C^2$ boundary. Let $\mathcal{P}(\overline{\Omega})$  be the set of all Borel probability measures on $\overline\Omega$ endowed with the Kantorovich-Rubinstein distance $d_1$ defined in \eqref{dis1}. We suppose throughout that $F,G:\overline{\Omega}\times\mathcal{P}(\overline{\Omega})\rightarrow \mathbb{R}$ and $L:\overline \Omega\times \mathbb{R}^n\to \mathbb{R}$ are given continuous functions.
Moreover, we assume the following conditions.
\begin{enumerate}
	\item[(L1)] $L\in C^1(\overline{\Omega}\times\mathbb{R}^n)$ and for all $(x,v)\in\overline \Omega\times \mathbb{R}^n$, 
	\begin{align}
	&|D_xL(x,v)|\leq C(1+|v|^2)\label{l11},\\
	&|D_vL(x,v)|\leq C(1+|v|)\label{l12},
	\end{align}
	for some constant $C>0$.
	\item[(L2)] There exist constants  $c_1,c_0>0$ such that
	\begin{equation}\label{l3b}
L(x,v)\geq c_1|v|^2-c_0 ,\ \ \ \ \ \forall (x,v)\in \overline{\Omega}\times \mathbb{R}^n.
	\end{equation} 
\item[(L3)] $v\longmapsto L(x,v)$ is convex for all $x\in\overline{\Omega}$.
\end{enumerate}
\begin{remark}\label{1o11}
$(i)$ As $\overline{\Omega}\times\mathcal{P}(\overline{\Omega})$ is a compact set, the continuity of $F$ and $G$  implies that they are bounded and uniformly continuous on $\overline{\Omega}\times\mathcal{P}(\overline{\Omega})$.\\ 
$(ii)$ In (L1), $L$ is assumed to be of class $C^1(\overline{\Omega}\times \mathbb{R}^n)$ just for simplicity. All the results of this paper hold true if $L$ is locally Lipschitz---hence, a.e. differentiable---in $\overline{\Omega}\times\mathbb{R}^n$ and satisfies the growth conditions \eqref{l11} and \eqref{l12} a.e. on $\overline{\Omega}\times \mathbb{R}^n$, see Remark \ref{red} below.
\end{remark}
\subsection{Existence of constrained MFG equilibria}
For any $t\in [0,T]$, we denote by $e_t:\Gamma\to \overline \Omega$ the evaluation map defined by 
$$
e_t(\gamma)= \gamma(t), \ \ \  \ \forall \gamma\in\Gamma.
$$
For any $\eta\in \mathcal{P}(\Gamma)$, we define
\begin{equation}\label{me}
m^\eta(t)=e_t\sharp \eta, \ \ \ \ \ \forall t \in [0,T].
\end{equation}
\begin{lemma}\label{3.2}
The following holds true.
	\begin{enumerate}
	\item[(i)] $m^\eta \in C([0,T];\mathcal{P}(\overline{\Omega}))$ for any $\eta\in\mathcal{P}(\Gamma)$.
	\item[(ii)]	Let $\eta_i$, $\eta\in\mathcal{P}(\Gamma)$, $i\geq 1$, be such that $\eta_i$ is narrowly convergent to $\eta$. Then $m^{\eta_i}(t)$ is narrowly convergent to $m^\eta(t)$  for all $t\in[0,T]$.
	\end{enumerate}
\end{lemma}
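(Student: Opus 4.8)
The plan is to treat the two parts separately, exploiting throughout the fact that each evaluation map $e_t:\Gamma\to\overline{\Omega}$ is $1$-Lipschitz for the uniform metric: indeed $|e_t(\gamma_1)-e_t(\gamma_2)|=|\gamma_1(t)-\gamma_2(t)|\le \sup_{s\in[0,T]}|\gamma_1(s)-\gamma_2(s)|$. In particular $e_t$ is continuous, hence Borel, so that $m^\eta(t)=e_t\sharp\eta$ is a well-defined element of $\mathcal{P}(\overline{\Omega})$ for every $t$ (it is supported in $\overline{\Omega}$ since $\gamma(t)\in\overline{\Omega}$ for all $\gamma\in\Gamma$).

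For part (i) I would fix $s,t\in[0,T]$ and exhibit an explicit transport plan between $m^\eta(t)$ and $m^\eta(s)$. Pushing $\eta$ forward through the continuous map $\gamma\mapsto(\gamma(t),\gamma(s))$ from $\Gamma$ to $\overline{\Omega}\times\overline{\Omega}$ produces a measure $\lambda\in\Pi\big(m^\eta(t),m^\eta(s)\big)$ (its marginals are $e_t\sharp\eta$ and $e_s\sharp\eta$), so that by the definition \eqref{dis1} of $d_1$ together with the push-forward formula,
\begin{equation*}
d_1\big(m^\eta(t),m^\eta(s)\big)\le \int_{\overline{\Omega}\times\overline{\Omega}}|x-y|\,d\lambda(x,y)=\int_{\Gamma}|\gamma(t)-\gamma(s)|\,d\eta(\gamma).
\end{equation*}
The integrand is continuous (hence Borel) in $\gamma$, is bounded by $\mathrm{diam}(\overline{\Omega})<+\infty$ uniformly in $\gamma$ and in $s,t$, and tends to $0$ pointwise as $t\to s$ by continuity of each $\gamma\in\Gamma$. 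Dominated convergence, with dominating constant $\mathrm{diam}(\overline{\Omega})$, then yields $d_1(m^\eta(t),m^\eta(s))\to 0$, which is the asserted continuity of $t\mapsto m^\eta(t)$.

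For part (ii) I would fix $t\in[0,T]$ and verify narrow convergence by testing against bounded continuous functions. For any $f\in C^0_b(\overline{\Omega})$ the composition $\gamma\mapsto f(\gamma(t))=f(e_t(\gamma))$ belongs to $C^0_b(\Gamma)$, being the composite of the bounded continuous $f$ with the continuous $e_t$. Hence the push-forward change-of-variables formula and the narrow convergence $\eta_i\to\eta$ give
\begin{equation*}
\int_{\overline{\Omega}}f\,dm^{\eta_i}(t)=\int_{\Gamma}f(\gamma(t))\,d\eta_i(\gamma)\xrightarrow{i\to\infty}\int_{\Gamma}f(\gamma(t))\,d\eta(\gamma)=\int_{\overline{\Omega}}f\,dm^{\eta}(t),
\end{equation*}
and since $f\in C^0_b(\overline{\Omega})$ was arbitrary this is exactly the narrow convergence $m^{\eta_i}(t)\to m^\eta(t)$.

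Neither step is deep, and the only point genuinely requiring care is the domination in part (i). For a general $\eta\in\mathcal{P}(\Gamma)$ there is no control on $\int_\Gamma\|\dot\gamma\|_2\,d\eta(\gamma)$, so the natural modulus-of-continuity bound $|\gamma(t)-\gamma(s)|\le|t-s|^{1/2}\|\dot\gamma\|_2$ cannot be integrated against $\eta$ to produce a uniform estimate. The state constraint enters precisely here: compactness, hence boundedness, of $\overline{\Omega}$ supplies the uniform dominating constant $\mathrm{diam}(\overline{\Omega})$ and lets dominated convergence replace a quantitative modulus of continuity. I also note that, $\overline{\Omega}$ being compact, narrow convergence and $d_1$ convergence coincide on $\mathcal{P}(\overline{\Omega})$ by Proposition \ref{cm}, so part (i) could equally be phrased and carried out entirely through narrow convergence, exactly as in part (ii).
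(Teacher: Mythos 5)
Your proof is correct. Part (ii) is essentially identical to the paper's argument: test against $f\in C(\overline{\Omega})$, use the push-forward formula to rewrite the integral over $\Gamma$, and invoke the narrow convergence of $\eta_i$. Part (i), however, takes a genuinely different route for the metric estimate. The paper first proves narrow convergence of $m^\eta(t_k)$ to $m^\eta(\overline{t})$ by dominated convergence (exactly as in your part (ii), with the sequence in $t$ rather than in $\eta$), and then upgrades narrow convergence to $d_1$-convergence by citing Proposition \ref{cm}, i.e.\ the equivalence of weak and Wasserstein convergence for measures supported on a fixed compact set --- the very remark you make in your closing sentence. You instead bound $d_1(m^\eta(t),m^\eta(s))$ directly by exhibiting the coupling $\lambda=(\gamma\mapsto(\gamma(t),\gamma(s)))\sharp\eta$, obtaining
\begin{equation*}
d_1\big(m^\eta(t),m^\eta(s)\big)\le \int_{\Gamma}|\gamma(t)-\gamma(s)|\,d\eta(\gamma),
\end{equation*}
and then apply dominated convergence to the right-hand side. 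Your version is marginally more self-contained (it bypasses Proposition \ref{cm}) and more quantitative: the displayed coupling inequality is precisely the estimate the paper re-derives later, via Kantorovich--Rubinstein duality, in the proof of Lemma \ref{pr1} to get the $\tfrac12$-H\"older continuity of $t\mapsto m^\eta(t)$ for equilibria, so your argument unifies the two steps. The paper's version buys a shorter proof of (i) given that Proposition \ref{cm} is already on record, and makes parts (i) and (ii) formally parallel. Your diagnosis of where boundedness of $\overline{\Omega}$ enters (supplying the dominating constant in the absence of any uniform control on $\int_\Gamma\|\dot\gamma\|_2\,d\eta$) is accurate and matches the role compactness plays in the paper's appeal to Proposition \ref{cm}.
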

\begin{proof}
	First, we prove point $(i)$. By definition \eqref{me}, it is obvious that $m^\eta(t)$ is a Borel probability measure on $\overline{\Omega}$ for any $t\in[0,T]$. Let $\{t_k\}\subset[0,T]$ be a sequence such that $t_k\rightarrow \overline{t}$. We want to show that 
	\begin{equation}\label{m1}
	\lim_{t_k\rightarrow \overline{t}} \int_{\overline{\Omega}} f(x) m^\eta(t_k,\,dx)=\int _{\overline{\Omega}} f(x) m^\eta(\overline{t},\,dx),
	\end{equation}
	for any $f\in C(\overline{\Omega})$. Since $m^\eta(t_k)=e_{t_k}\sharp \eta$ and $e_{t_k}(\gamma)=\gamma(t_k)$, we have that
	\begin{equation*}
	\lim_{t_k\rightarrow \overline{t}} \int_{\overline{\Omega}} f(x) m^\eta(t_k,\,dx)=\lim_{t_k\rightarrow \overline{t}}\int_{\Gamma} f(e_{t_k}(\gamma))\,d\eta(\gamma)=\lim_{t_k\rightarrow \overline{t}}\int_{\Gamma} f(\gamma(t_k))\,d\eta(\gamma).
	\end{equation*}
	Since $f\in C(\overline{\Omega})$ and $\gamma\in \Gamma$, then $f(\gamma(t_k))\rightarrow f(\gamma(\overline{t}))$ and $|f(\gamma(t_k))|\leq ||f||_\infty$. Therefore, by Lebesgue's dominated convergence theorem, we have that
	\begin{equation}
	\lim_{t_k\rightarrow \overline{t}}\int_{\Gamma} f(\gamma(t_k))\,d\eta(\gamma)=\int_{\Gamma} f(\gamma(\overline{t}))\,d\eta(\gamma).
	\end{equation}
	Thus, recalling the definition of $m^\eta$, we obtain \eqref{m1}. Moreover, by Proposition \ref{cm}, we conclude that $d_1(m^\eta(t_k),m^\eta(\overline{t}))\rightarrow 0$. This completes the proof of point $(i)$.\\
	In order to prove point $(ii)$, we suppose that $\eta_i$ is narrowly convergent to $\eta$. Then, for all $f\in C(\overline{\Omega})$ we have that
	\begin{equation*}
	\lim_{i \rightarrow \infty} \int_{\overline \Omega} f(x)m^{\eta_i}(t,\,dx)=\lim_{i \rightarrow \infty} \int_{\Gamma} f(\gamma(t))\,d\eta_i(\gamma)=  \int_{\Gamma} f(\gamma(t)) \,d\eta(\gamma) = \int _{\overline \Omega} f(x)  m^\eta(t,\,dx).
	\end{equation*}
	Hence, $m^{\eta_i}(t)$ is narrowly convergent to $m^\eta(t)$ for all $t\in[0,T]$.
\end{proof}
\noindent
For any fixed $m_0\in\mathcal{P}(\overline{\Omega})$, we denote by ${\mathcal P}_{m_0}(\Gamma)$ the set of all Borel probability measures $\eta$ on $\Gamma$ such that $e_0\sharp \eta=m_0$.
For all $\eta \in \mathcal{P}_{m_0}(\Gamma)$, we define 
\begin{equation}
J_\eta [\gamma]=\int_0^T \Big[L(\gamma(t),\dot \gamma(t))+ F(\gamma(t),m^\eta(t))\Big]\ dt + G(\gamma(T),m^\eta(T)), \ \ \ \gamma\in\Gamma.
\end{equation}
\begin{remark}\label{nonempty}
	We note that ${\mathcal P}_{m_0}(\Gamma)$ is nonempty. Indeed, let $j:\overline{\Omega}\rightarrow \Gamma$ be the continuous map defined by
	$$
	j(x)(t)=x \ \ \ \ \forall t \in[0,T].
	$$
	Then,
	$$
	\eta :=j\sharp m_0
	$$
	is a Borel probability measure on $\Gamma$ and $\eta \in\mathcal{P}_{m_0}(\Gamma)$.
\end{remark}
\noindent
For all $x \in \overline{\Omega}$ and $\eta\in\mathcal{P}_{m_0}(\Gamma)$, we define
\begin{equation}
\Gamma^\eta[x]=\left\{ \gamma\in\Gamma[x]:J_\eta[\gamma]=\min_{\Gamma[x]} J_\eta\right\}.
\end{equation}
\begin{definition}
Let $m_0\in\mathcal{P}(\overline{\Omega})$. We say that $\eta\in\mathcal{P}_{m_0}(\Gamma)$ is a constrained MFG equilibrium for $m_0$ if
\begin{equation}
supp(\eta)\subseteq \bigcup_{x\in\overline{\Omega}} \Gamma^\eta[x].
\end{equation}
\end{definition}
\noindent
In other words, $\eta\in\mathcal{P}_{m_0}(\Gamma)$ is a constrained MFG equilibrium for $m_0$ if for $\eta$-a.e. $\overline{\gamma}\in \Gamma$ we have that
\begin{equation*}
 J_\eta[\overline{\gamma}]\leq J_\eta[\gamma], \ \ \ \ \ \ \forall \gamma \in\Gamma[\overline{\gamma}(0)].
\end{equation*}
The main result of this section is the existence of constrained MFG equilibria for $m_0$.
\begin{theorem}\label{tesistenza}
	Let $\Omega$ be a bounded open subset of $\mathbb{R}^n$ with $C^2$ boundary and let $m_0\in\mathcal{P}(\overline{\Omega})$. Suppose that (L1)-(L3) hold true. Let $F:\overline{\Omega}\times\mathcal{P}(\overline{\Omega})\rightarrow \mathbb{R}$ and $G:\overline{\Omega}\times \mathcal{P}(\overline{\Omega})\rightarrow \mathbb{R}$ be continuous. Then, there exists at least one constrained MFG equilibrium for $m_0$.
\end{theorem}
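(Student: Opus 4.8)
The plan is to realize a constrained MFG equilibrium as a fixed point of a ``best-response'' set-valued map and to invoke the Kakutani fixed point theorem \cite{ka}. For $\eta\in\mathcal{P}_{m_0}(\Gamma)$ the marginals $m^\eta(t)=e_t\sharp\eta$ determine the cost $J_\eta$, and a constrained MFG equilibrium is precisely a measure $\eta$ satisfying $\eta\in E(\eta)$, where $E(\eta)$ denotes the set of all $\widehat\eta\in\mathcal{P}_{m_0}(\Gamma)$ concentrated on optimal trajectories of $J_\eta$, i.e. on $\bigcup_{x\in\overline\Omega}\Gamma^\eta[x]$. To apply Kakutani I must (a) restrict $E$ to a nonempty convex set that is compact for the narrow topology and is mapped into its own subsets, (b) show $E$ has nonempty convex values, and (c) show $E$ has closed graph.

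First I would build the compact domain through a uniform a priori bound on minimizers. Comparing any $\overline\gamma\in\Gamma^\eta[x]$ with the constant competitor $\gamma\equiv x\in\Gamma[x]$, and using that $L(\cdot,0)$ is bounded on $\overline\Omega$ together with the boundedness of $F,G$ (Remark \ref{1o11}), gives $J_\eta[\overline\gamma]\le M$ with $M$ independent of $x$ and $\eta$; the coercivity (L2) then yields $c_1\|\dot{\overline\gamma}\|_2^2\le M'$, hence a uniform bound $\|\dot{\overline\gamma}\|_2\le R$. By Cauchy--Schwarz this makes all minimizers equi-$\tfrac12$-Hölder continuous, so $K=\{\gamma\in\Gamma:\|\dot\gamma\|_2\le R\}$ is compact in the uniform topology (Ascoli--Arzelà, closedness of $\overline\Omega$, and weak-$L^2$ lower semicontinuity of the norm). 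I then set $\mathcal{K}=\{\eta\in\mathcal{P}_{m_0}(\Gamma):\operatorname{supp}(\eta)\subseteq K\}$, which is convex and, being a narrowly closed subset of $\mathcal{P}(K)$ with $K$ compact, is narrowly compact by Prokhorov (Theorem \ref{tp}); it is nonempty because the constant-curve measure $j\sharp m_0$ of Remark \ref{nonempty} belongs to it. Running the coercivity bound through the direct method—extract a uniform limit with weakly-$L^2$-convergent derivatives and use convexity (L3) for lower semicontinuity of the Lagrangian term, continuity of $F,G$ for the remaining terms—shows $\Gamma^\eta[x]\neq\emptyset$, and since all minimizers lie in $K$ we get $E(\eta)\subseteq\mathcal{K}$.

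For the nonemptiness of $E(\eta)$ I would select a minimizer in a measurable way: the set-valued map $x\mapsto\Gamma^\eta[x]$ has nonempty closed values inside the compact $K$ and (by the stability argument below, with $\eta$ fixed) closed graph, so it admits a Borel selection $s:\overline\Omega\to\Gamma$ with $s(x)\in\Gamma^\eta[x]$; then $\widehat\eta=s\sharp m_0$ satisfies $e_0\sharp\widehat\eta=m_0$ and is concentrated on minimizers, so $\widehat\eta\in E(\eta)$. Convexity of $E(\eta)$ is immediate, as both defining constraints are preserved under convex combinations.

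The crux is the closed graph of $E$, and this is where the state constraint makes the argument delicate. Suppose $\eta_n\to\eta$ and $\widehat\eta_n\in E(\eta_n)$ with $\widehat\eta_n\to\widehat\eta$ narrowly; the identity $e_0\sharp\widehat\eta=m_0$ passes to the limit by continuity of $e_0$, so it remains to show $\operatorname{supp}(\widehat\eta)\subseteq\bigcup_x\Gamma^\eta[x]$. Given $\gamma\in\operatorname{supp}(\widehat\eta)$, Proposition \ref{p21} furnishes $\gamma_n\in\operatorname{supp}(\widehat\eta_n)\subseteq\Gamma^{\eta_n}[\gamma_n(0)]$ with $\gamma_n\to\gamma$, and $x_n:=\gamma_n(0)\to x:=\gamma(0)$. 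To see $\gamma\in\Gamma^\eta[x]$, fix any competitor $\zeta\in\Gamma[x]$: by Proposition \ref{aprt} there are $\zeta_n\in\Gamma[x_n]$ with $\zeta_n\to\zeta$ uniformly, $\dot\zeta_n\to\dot\zeta$ a.e. and $|\dot\zeta_n|\le C|\dot\zeta|$, the domination enabling dominated convergence for $\int_0^T L(\zeta_n,\dot\zeta_n)\,dt$ via the growth of $L$ from (L1). Since $\eta_n\to\eta$ forces $m^{\eta_n}(t)\to m^\eta(t)$ in $d_1$ (Lemma \ref{3.2} combined with Proposition \ref{cm}) and $F,G$ are uniformly continuous (Remark \ref{1o11}), I obtain $J_{\eta_n}[\zeta_n]\to J_\eta[\zeta]$, while weak-$L^2$ lower semicontinuity together with convexity (L3) gives $J_\eta[\gamma]\le\liminf_n J_{\eta_n}[\gamma_n]$. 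Combining these with the optimality inequality $J_{\eta_n}[\gamma_n]\le J_{\eta_n}[\zeta_n]$ yields $J_\eta[\gamma]\le J_\eta[\zeta]$, so $\gamma$ is optimal. Hence $E$ has closed graph, and as a map into the compact metrizable set $\mathcal{K}$ it is upper semicontinuous with nonempty convex values; the Kakutani fixed point theorem \cite{ka} then produces $\eta\in E(\eta)$, the desired equilibrium. I expect this stability of minimizers to be the main obstacle: in contrast with the unconstrained case one cannot translate or perturb competitors freely without violating $\gamma(t)\in\overline\Omega$, and Proposition \ref{aprt} is exactly the device that supplies admissible recovery sequences and makes the whole fixed-point scheme close.
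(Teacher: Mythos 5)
Your proposal is correct and follows essentially the same route as the paper: the same best-response map $E$, the same a priori bound on minimizers yielding the compact convex domain $\mathcal{P}_{m_0}(\Gamma_K)$, nonemptiness of $E(\eta)$ via a Borel measurable selection, closed graph via Proposition \ref{p21}, the recovery sequences of Proposition \ref{aprt}, and weak-$L^2$ lower semicontinuity under (L3), concluding with Kakutani's theorem. The only cosmetic difference is that the paper phrases the values of $E$ through the disintegration $\{\widehat\eta_x\}$ of Theorem \ref{td}, which is equivalent to your formulation in terms of concentration on $\bigcup_{x}\Gamma^\eta[x]$.
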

\noindent
Theorem \ref{tesistenza} will be proved in Section \ref{sub}. Now, we will show some properties of $\Gamma^\eta[x]$ that we will use in what follows.
\begin{lemma}\label{pr1}
For all $x\in\overline{\Omega}$ and $\eta\in\mathcal{P}_{m_0}(\Gamma)$ the following holds true.
\begin{enumerate}
\item[(i)] $\Gamma^\eta[x]$ is a nonempty set. 
\item[(ii)] All $\gamma\in\Gamma^\eta[x]$ satisfy
\begin{equation}\label{k1}
||\dot{\gamma}||_2\leq K,
\end{equation}
where
\begin{equation}\label{k11}
K=\frac{1}{\sqrt{c_1}}\Big[T\max_{\overline{\Omega}}L(x,0)+2T\max_{\overline{\Omega}\times \mathcal{P}(\overline{\Omega})}|F|+2\max_{\overline{\Omega}\times \mathcal{P}(\overline{\Omega})}|G|+Tc_0\Big]^{\frac{1}{2}}
\end{equation}
and $c_0$, $c_1$ are the constants in \eqref{l3b}. Consequently, all minimizers $\gamma \in \Gamma^\eta[x]$ are $\frac{1}{2}$-H\"{o}lder continuous of constant $K$.
\end{enumerate}
In addition, if $\eta\in \mathcal{P}_{m_0}(\Gamma)$ is a constrained MFG equilibrium for $m_0$, then $m^\eta(t)=e_t\sharp \eta$ is $\frac{1}{2}$-H\"{o}lder continuous of constant $K$.
\end{lemma}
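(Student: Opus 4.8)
The plan is to prove the three assertions in turn: the direct method of the calculus of variations for the existence in (i), a comparison with the constant curve for the bound in (ii), and Kantorovich--Rubinstein duality for the final statement on $m^\eta$. For (i), I would fix a minimizing sequence $\gamma_n\in\Gamma[x]$ with $J_\eta[\gamma_n]\to\inf_{\Gamma[x]}J_\eta$. Since $F$ and $G$ are bounded on $\overline{\Omega}\times\mathcal{P}(\overline{\Omega})$ by Remark \ref{1o11} and (L2) gives $L(\gamma_n,\dot\gamma_n)\geq c_1|\dot\gamma_n|^2-c_0$, the upper bound on $J_\eta[\gamma_n]$ yields a uniform bound $||\dot\gamma_n||_2\leq M$. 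Combined with $\gamma_n(0)=x$ and $\gamma_n(t)\in\overline{\Omega}$, Cauchy--Schwarz gives equicontinuity and uniform boundedness, so by Arzel\`a--Ascoli a subsequence converges uniformly to some $\gamma$; after a further extraction $\dot\gamma_n\rightharpoonup w$ weakly in $L^2$, and passing to the limit in $\gamma_n(t)-\gamma_n(0)=\int_0^t\dot\gamma_n$ shows $\gamma\in AC(0,T;\mathbb{R}^n)$ with $\dot\gamma=w$, $\gamma(0)=x$, and $\gamma(t)\in\overline{\Omega}$ (as $\overline{\Omega}$ is closed), i.e. $\gamma\in\Gamma[x]$.

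The heart of the existence argument is lower semicontinuity of $J_\eta$ along this convergence. The terms built from $F$ and $G$ pass to the limit by continuity and uniform convergence, since $m^\eta$ is fixed throughout. For the Lagrangian term I would invoke the Tonelli semicontinuity theorem: as $L$ is continuous, bounded below, and convex in $v$ by (L3), the map $\gamma\mapsto\int_0^T L(\gamma(t),\dot\gamma(t))\,dt$ is sequentially lower semicontinuous under uniform convergence of $\gamma_n$ together with weak $L^2$ convergence of $\dot\gamma_n$. Hence $J_\eta[\gamma]\leq\liminf_n J_\eta[\gamma_n]=\inf_{\Gamma[x]}J_\eta$, so $\gamma$ is a minimizer and $\Gamma^\eta[x]\neq\emptyset$. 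I expect this lower semicontinuity step to be the main obstacle, as it is the point where convexity (L3) is essential and where the lack of completeness of $AC(0,T;\mathbb{R}^n)$ must be circumvented by working with weak $L^2$ limits of the derivatives.

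For (ii), let $\gamma\in\Gamma^\eta[x]$ and compare it with the constant competitor $\gamma_0\equiv x\in\Gamma[x]$. Minimality gives $J_\eta[\gamma]\leq J_\eta[\gamma_0]$. Bounding the right-hand side from above by $T\max_{\overline{\Omega}}L(\cdot,0)+T\max|F|+\max|G|$ (maxima over $\overline{\Omega}\times\mathcal{P}(\overline{\Omega})$), and the left-hand side from below by $c_1||\dot\gamma||_2^2-Tc_0-T\max|F|-\max|G|$ using (L2) and the lower bounds on $F,G$, I would obtain $c_1||\dot\gamma||_2^2\leq T\max_{\overline{\Omega}}L(\cdot,0)+2T\max|F|+2\max|G|+Tc_0$, which is precisely \eqref{k1}--\eqref{k11}. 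The $\tfrac12$-H\"older bound then follows from Cauchy--Schwarz, since for $s<t$ one has $|\gamma(t)-\gamma(s)|\leq\int_s^t|\dot\gamma|\leq||\dot\gamma||_2\,|t-s|^{1/2}\leq K|t-s|^{1/2}$.

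Finally, for the assertion on $m^\eta$, I would use the Kantorovich--Rubinstein formula together with the identity $\int_{\overline{\Omega}}f\,dm^\eta(t)-\int_{\overline{\Omega}}f\,dm^\eta(s)=\int_\Gamma\big[f(\gamma(t))-f(\gamma(s))\big]\,d\eta(\gamma)$, valid for any $1$-Lipschitz $f$ and all $s,t\in[0,T]$. Because $\eta$ is a constrained MFG equilibrium, $\eta$-a.e. curve $\gamma$ lies in some $\Gamma^\eta[\gamma(0)]$ and hence obeys the H\"older bound from (ii), so that $|f(\gamma(t))-f(\gamma(s))|\leq|\gamma(t)-\gamma(s)|\leq K|t-s|^{1/2}$ for $\eta$-a.e. $\gamma$. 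Integrating against the probability measure $\eta$ and taking the supremum over $1$-Lipschitz $f$ gives $d_1(m^\eta(t),m^\eta(s))\leq K|t-s|^{1/2}$, which is the desired $\tfrac12$-H\"older continuity of $m^\eta$ with constant $K$.
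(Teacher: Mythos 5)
Your proposal is correct and follows essentially the same route as the paper: comparison with the constant competitor for the bound \eqref{k1}, and the Kantorovich--Rubinstein duality argument for the H\"older continuity of $m^\eta$. The only difference is that for part (i) the paper simply cites a classical existence theorem from the calculus of variations, whereas you write out the underlying direct-method argument (coercivity from (L2), Arzel\`a--Ascoli, weak $L^2$ convergence of derivatives, and Tonelli lower semicontinuity via (L3)) in full.
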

\begin{proof}
By classical results in the calculus of variation (see, e.g., \cite[ Theorem 6.1.2]{c}), there exists at least one mimimizer of $J_\eta[\cdot]$ on $\Gamma$ for any fixed initial point $x\in \overline{\Omega}$. So $\Gamma^\eta[x]$ is a nonempty set.\\  
Let $x\in\overline{\Omega}$ and let $\gamma\in \Gamma^\eta[x]$. By comparing the cost of $\gamma$ with the cost of the constant trajectory $\gamma(0)\equiv x$, one has that%
\begin{align}\label{2}
&\int_0^T \Big[L(\gamma(t),\dot \gamma(t)) + F(\gamma(t),m^\eta(t))\Big] \, dt + G(\gamma(T),m^\eta(T))\\
&\leq \int _0^T \Big[L(x,0)+ F(x,m^\eta(t))\Big] \,dt + G(x,m^\eta(T)) \nonumber\\
&\leq \Big[T\max_{\overline{\Omega}}L(x,0)+T\max_{\overline{\Omega}\times \mathcal{P}(\overline{\Omega})}|F|+\max_{\overline{\Omega}\times \mathcal{P}(\overline{\Omega})}|G|\Big].\nonumber
\end{align}
Using \eqref{l3b} in (\ref{2}), one has that
\begin{equation}\label{4}
||\dot\gamma||_2 \leq \frac{1}{\sqrt{c_1}}\Big[T\max_{\overline{\Omega}}L(x,0)+2T\max_{\overline{\Omega}\times \mathcal{P}(\overline{\Omega})}|F|+2\max_{\overline{\Omega}\times \mathcal{P}(\overline{\Omega})}|G|+Tc_0\Big]^{\frac{1}{2}}=K,
\end{equation}
where $c_0$, $c_1$ are the constants in \eqref{l3b}. This completes the proof of point $(ii)$ since the H\"{o}lder regularity of $\gamma$ is a direct conseguence of the estimate \eqref{4}.\\
Finally, we claim that, if $\eta$ is a constrained MFG equilibrium for $m_0$, then the map $t \rightarrow m^{\eta}(t)$ is $\frac{1}{2}$-H\"older continuous with constant $K$. Indeed, for any $t_1,t_2\in[0,T]$, we have that
\begin{equation}
d_1(m^{\eta}(t_2),m^{\eta}(t_1))= \sup_{\phi} \int_{\bar \Omega} \phi(x)\,(m^{\eta}(t_2,dx)-m^{\eta}(t_1,dx)),
\end{equation}
where the supremum is taken over the set of all 1-Lipschitz continuous maps $\phi:\overline \Omega\rightarrow \mathbb{R}$. Since $m^{\eta}(t)=e_t\sharp \eta$ and the map $\phi$ is 1-Lipschitz continuous, one has that
\begin{align*}
&\int_{\overline\Omega} \phi(x)\,(m^{\eta}(t_2,dx)-m^{\eta}(t_1,dx))=\int_{\Gamma}\Big[ \phi(e_{t_2}(\gamma))-\phi(e_{t_1}(\gamma))\Big] \,d\eta(\gamma)\\
&=\int_{\Gamma} \Big[\phi(\gamma(t_2))-\phi(\gamma(t_1))\Big] \,d\eta(\gamma)\leq \int_{\Gamma} |\gamma(t_2)-\gamma(t_1)|\,d\eta(\gamma).
\end{align*}
Since $\eta$ is a constrained MFG equilibrium for $m_0$, property $(ii)$ yields
\begin{align*}
\int_{\Gamma} |\gamma(t_2)-\gamma(t_1)|\,d\eta(\gamma)\leq K\int_{\Gamma} |t_2-t_1|^{\frac{1}{2}}\,d\eta(\gamma)=K|t_2-t_1|^{\frac{1}{2}}.
\end{align*}
Hence, we conclude that
\begin{equation*}
d_1(m^{\eta}(t_2),m^{\eta}(t_1)) \leq K |t_2-t_1|^{\frac{1}{2}}, \ \ \ \ \forall\ t_1, t_2 \in [0,T]
\end{equation*}
and the map $t\longmapsto m^{\eta}(t)$ is 1/2-H\"older continuous.
 \end{proof}
 \begin{lemma}\label{3.4}
 Let $\eta_i$, $\eta\in \mathcal{P}_{m_0}(\Gamma)$ be such that $\eta_i$ narrowly converges to $\eta$. Let $x_i\in\overline{\Omega}$ be such that $x_i\rightarrow x$ and let $\gamma_i\in \Gamma^{\eta_i}[x_i]$ be such that $\gamma_i\rightarrow \bg$. Then $\bg\in\Gamma^\eta[x]$. Consequently, $\Gamma^\eta[\cdot]$ has closed graph.
 \end{lemma}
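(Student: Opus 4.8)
The goal is to show that the limit arc $\bg$ minimizes $J_\eta$ over $\Gamma[x]$, so that $\bg\in\Gamma^\eta[x]$; the closed-graph statement then follows at once by applying the first assertion to the constant sequence $\eta_i\equiv\eta$ (which narrowly converges to $\eta$). The plan is to run a direct-method argument, split into a compactness/lower-semicontinuity half for the minimizing sequence $\gamma_i$ and a recovery-sequence half built on Proposition \ref{aprt}.

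First I would check that $\bg\in\Gamma[x]$. Since each $\gamma_i\in\Gamma^{\eta_i}[x_i]$, Lemma \ref{pr1}(ii) gives the uniform bound $\|\dot\gamma_i\|_2\le K$. Hence, up to a subsequence, $\dot\gamma_i$ converges weakly in $L^2(0,T;\mathbb{R}^n)$ to some $g$; testing this weak convergence against $\mathbf{1}_{[0,t]}$ and using $\gamma_i(t)=x_i+\int_0^t\dot\gamma_i$, together with $\gamma_i\to\bg$ uniformly and $x_i\to x$, identifies $\bg(t)=x+\int_0^t g$. Thus $\bg$ is absolutely continuous with $\dot{\bg}=g$ a.e. (so in fact $\dot\gamma_i\rightharpoonup\dot{\bg}$ weakly in $L^2$ by uniqueness of the limit), while uniform convergence and $\gamma_i(t)\in\overline\Omega$ give $\bg(t)\in\overline\Omega$; hence $\bg\in\Gamma[x]$.

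The central step is the lower-semicontinuity inequality $J_\eta[\bg]\le\liminf_i J_{\eta_i}[\gamma_i]$. For the running and terminal cost terms I would pass to the limit directly: $\gamma_i\to\bg$ uniformly, and by Lemma \ref{3.2}(ii) $m^{\eta_i}(t)$ converges narrowly to $m^\eta(t)$ for every $t$, equivalently in $d_1$ since $\overline\Omega$ is compact (Proposition \ref{cm}); so continuity and boundedness of $F$ and $G$ (Remark \ref{1o11}(i)), together with dominated convergence, yield $\int_0^T F(\gamma_i,m^{\eta_i})\,dt\to\int_0^T F(\bg,m^\eta)\,dt$ and $G(\gamma_i(T),m^{\eta_i}(T))\to G(\bg(T),m^\eta(T))$. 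For the action term I would invoke the classical Tonelli lower-semicontinuity theorem for $\int_0^T L(\gamma_i,\dot\gamma_i)\,dt$ under uniform convergence of $\gamma_i$ and weak $L^2$ convergence of $\dot\gamma_i$, which is precisely where the convexity of $v\mapsto L(x,v)$ (L3) and the coercivity \eqref{l3b} enter. This lower semicontinuity of the action is the main obstacle, being the only genuinely nontrivial limit passage.

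Finally, for an arbitrary competitor $\gamma\in\Gamma[x]$ I would use Proposition \ref{aprt} to produce $\gamma_i'\in\Gamma[x_i]$ with $\gamma_i'\to\gamma$ uniformly, $\dot\gamma_i'\to\dot\gamma$ a.e., and $|\dot\gamma_i'|\le C|\dot\gamma|$. Integrating the bound \eqref{l12} gives $L(x,v)\le C'(1+|v|^2)$, so $L(\gamma_i',\dot\gamma_i')$ is dominated by the integrable function $C'(1+C^2|\dot\gamma|^2)$; dominated convergence (with continuity of $L$), plus the same argument as above for the $F$ and $G$ terms, yields $J_{\eta_i}[\gamma_i']\to J_\eta[\gamma]$. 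Since $\gamma_i$ minimizes $J_{\eta_i}$ over $\Gamma[x_i]$, we have $J_{\eta_i}[\gamma_i]\le J_{\eta_i}[\gamma_i']$ for every $i$, and combining the two halves gives $J_\eta[\bg]\le\liminf_i J_{\eta_i}[\gamma_i]\le\lim_i J_{\eta_i}[\gamma_i']=J_\eta[\gamma]$. As $\gamma\in\Gamma[x]$ is arbitrary, $\bg\in\Gamma^\eta[x]$, which completes the proof.
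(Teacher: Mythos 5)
Your proof is correct and follows essentially the same route as the paper's: a liminf inequality for the minimizers $\gamma_i$ (weak $L^2$ convergence of $\dot\gamma_i$ plus convexity of $L$ in $v$, which is exactly what the paper writes out by hand where you cite Tonelli's lower-semicontinuity theorem) combined with a recovery sequence for an arbitrary competitor built from Proposition \ref{aprt}. The only detail worth adding is the paper's preliminary reduction to competitors $\gamma$ with $\int_0^T|\dot\gamma|^2\,dt<\infty$ --- otherwise $J_\eta[\gamma]=+\infty$ by (L2) and the inequality is trivial --- since your dominating function $C'(1+C^2|\dot\gamma|^2)$ is integrable only in that case.
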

 \begin{proof}
 We want to prove that 
 \begin{equation}\label{mini1}
 J_\eta[{\bg}]\leq J_\eta[\gamma], \ \ \  \forall \gamma\in \Gamma[x]\ \mbox{such that} \ \int_0^T|\dot{\gamma}|^2\,dt <\infty.
 \end{equation}
We observe that the above request is not restrictive because, by assuption (L2), if $\int_0^T|\dot{\gamma}|^2\,dt=\infty$ then the above inequality is trivial.\\
Fix $\gamma\in\Gamma[x]$ with $\int_0^T|\dot{\gamma}|^2\,dt <\infty$, by Proposition \ref{aprt}, we have that there exists $\widehat{\gamma}_i\in \Gamma[x_i]$ such that $\widehat{\gamma}_i\rightarrow \gamma$ uniformly on $[0,T]$, $\dot{\widehat{\gamma}}_i\rightarrow \dot{\gamma}$ a.e. on $[0,T]$ and $|\dot{\widehat{\gamma}}_i(t)|\leq C|\dot{\gamma}(t)|$ for any $i\geq 1$, a.e. $t\in[0,T]$, and some constant $C\geq 0$. 
 Since $\gamma_i\in\Gamma^{\eta_i}[x_i]$, one has that
 \begin{equation}
 J_{\eta_i}[\gamma_i]\leq J_{\eta_i}[\widehat{\gamma}_i],\ \  \ \ \forall i\geq 1.
 \end{equation}
 So, in order to prove \eqref{mini1}, we have to check that
 \begin{enumerate}
 	\item[(a)] $J_\eta[\bg]\leq\liminf_{i\rightarrow \infty} J_{\eta_i}[{\gamma}_i]$;
 	\item[(b)] $\lim_{i\rightarrow +\infty} J_{\eta_i}[\widehat{\gamma}_i]=J_\eta[\gamma]$.
 \end{enumerate}
 First we show that $(a)$ holds, that is,
 \begin{align}\label{linf}
 &\liminf_{i\rightarrow \infty} \Big \{\int_0^T \Big[L\big(\gamma_{i}(t),\dot{\gamma}_{i}(t))\big)+F\big(\gamma_{i}(t),m^{\eta_i}(t)\big)\Big]\,dt+G(\gamma_{i}(T),m^{\eta_i}(T))\Big\}\nonumber\\
 &\geq \int_0^T[L\big(\bg(t),\dot{\bg}(t))\big)+F\big(\bg(t),m^{\eta}(t)\big)\Big]\,dt+G(\bg(T),m^{\eta}(T)).
 \end{align}
 First of all, we recall that, by Lemma \ref{3.2}, $m^{\eta_i}(t)$ narrowly converges to $m^{\eta}(t)$ for all $t \in [0,T]$.
 Owing to the convergence of $\gamma_{i}$ to $\bg$, the  narrow convergence of $m^{\eta_i}(t)$ to $m^\eta(t)$, and our assumption on $F$ and $G$, we conclude that
 \begin{align*}
 \int_0^T F(\gamma_{i}(t),m^{\eta_i}(t))\,dt &\xrightarrow{i\rightarrow \infty} \int_0^T F(\bg(t),m^{\eta}(t))\,dt, \\
 G(\gamma_{i}(T),m^{\eta_i}(T)) &\xrightarrow{i\rightarrow \infty} G(\bg(T),m^{\eta}(T)).
 \end{align*}
 Up to taking a subsequence of $\gamma_i$, we can assume that $\dot{\gamma}_{i}\rightharpoonup\dot{\bg}$ in $L^2(0,T;\mathbb{R}^n)$ without loss of generality. By assumption (L3), one has that  
 \begin{align*}
 &\int_0^T L(\gamma_{i}(t),\dot{\gamma}_{i}(t))=\int_0^T L(\bg(t),\dot{\gamma}_{i}(t))\,dt+\int_0^T \Big[ L(\gamma_{i}(t),\dot{\gamma}_{i}(t))-L(\bg(t),\dot{\gamma}_{i}(t))\Big]\,dt\\
 &\geq \int_0^T  \Big[L(\bg(t),\dot{\bg}(t))+ \langle D_vL(\bg(t),\dot{\bg}(t)),\dot{\gamma}_{i}-\dot{\bg} \rangle\Big]\,dt+\int_0^T \Big[ L(\gamma_{i}(t),\dot{\gamma}_{i}(t))-L(\bg(t),\dot{\gamma}_{i}(t))\Big]\,dt.
 \end{align*}
 Since $\gamma_{i}\in \Gamma^{\eta_i}[x_i]$ and $ \gamma_i\rightarrow \bg$, by (L1), we obtain
 \begin{align*}
 \int_0^T \Big[L(\gamma_i(t),\dot{\gamma}_i(t))-L(\bg(t),\dot{\gamma}_i(t))\Big]\,dt \xrightarrow{i\rightarrow \infty} 0.
 \end{align*}
 Moreover, since $\dot{\gamma}_i\rightharpoonup\dot{\bg}$ in $L^2(0,T;\mathbb{R}^n)$, one has that
 \begin{equation*}
 \int_0^T \big\langle D_vL(\bg(t),\dot{\bg}(t)),\dot{\gamma}_i-\dot{\bg} \big\rangle\,dt \xrightarrow{i\rightarrow \infty} 0.
 \end{equation*}
 Thus, \eqref{linf} holds.\\ Finally, we prove $(b)$, i.e.,
 \begin{align*}
& \lim_{i\rightarrow \infty}\left\{ \int_0^TL(\widehat{\gamma}_i(t),\dot{\widehat{\gamma}}_i(t))+F(\widehat{\gamma}_i(t),m^{\eta_i}(t))\,dt+G(\widehat{\gamma}_i(T),m^{\eta_i}(T))\right\}\\
 &=\int_0^TL(\gamma(t),\dot{\gamma}(t))+F(\gamma(t),m^\eta(t))\,dt+G(\gamma(T),m^\eta(T)).
 \end{align*}
 Owing to the convergence of $\widehat{\gamma}_i$ to $\gamma$, the narrow convergence of $m^{\eta_i}(t)$ to $m^\eta(t)$ for all $t\in[0,T]$, and our assumption on $F$ and $G$, one has
 \begin{align*}
 \int_0^T F(\widehat{\gamma}_i(t),m^{\eta_i}(t))\,dt &\xrightarrow{i\rightarrow \infty} \int_0^T F(\gamma(t),m^{\eta}(t))\,dt, \\
 G(\widehat{\gamma}_i(T),m^{\eta_i}(T)) &\xrightarrow{i\rightarrow \infty} G(\gamma(T),m^{\eta}(T)).
 \end{align*}
 Hence, we only need to prove that 
 \begin{equation}\label{linf3}
 \liminf_{i\rightarrow \infty} \int_0^T L(\widehat{\gamma}_i(t),\dot{\widehat{\gamma}}_i(t))\,dt=\int_0^T L(\gamma(t),\dot{\gamma}(t))\,dt.
 \end{equation}
 Owing to (L1), one has that
 \begin{align*}
 &\Big|\int_0^T \big[L(\widehat{\gamma}_i(t),\dot{\widehat{\gamma}}_i(t))-L(\gamma(t),\dot{\gamma}(t))\big]\,dt\Big|\\
 &\leq \int_0^T \Big|L(\widehat{\gamma}_i(t),\dot{\widehat{\gamma}}_i(t))-L(\gamma(t),\dot{\widehat{\gamma}}_i(t))\Big|\,dt+\int_0^T \Big|L(\gamma(t),\dot{\widehat{\gamma}}_i(t))-L(\gamma(t),\dot{\gamma}(t))\Big|\,dt\\
 &\leq ||\widehat{\gamma}_i-\gamma||_\infty\int_0^T \Big(1+|\dot{\widehat{\gamma}}_i|^2\Big)\,dt+\int_0^T \Big|\int_0^1\Big\langle D_vL\big(\gamma(t),\lambda\dot{\widehat{\gamma}}_i+(1-\lambda)\dot{\gamma}\big),\dot{\widehat{\gamma}}_i(t)-\dot{\gamma}(t)\Big\rangle\,d\lambda\Big|\,dt\\
 &\leq ||\widehat{\gamma}_i-\gamma||_\infty\int_0^T \Big(1+|\dot{\widehat{\gamma}}_i|^2\Big)\,dt + C\int_0^T\int_0^1 \Big(1+|\dot{\widehat{\gamma}}_i|+|\dot{\gamma}|\Big)|\dot{\widehat{\gamma}}_i(t)-\dot{\gamma}(t)|\,dt.
 \end{align*}
 Since $\widehat{\gamma}_i\rightarrow \gamma$ uniformly on $[0,T]$ and $|\dot{\widehat{\gamma}}_i(t)|\leq C|\dot{\gamma}(t)|$ for any $i\geq 1$ and for any $t\in[0,T]$, we have that
 \begin{equation*}
 ||\widehat{\gamma}_i-\gamma||_\infty\int_0^T \Big(1+|\dot{\widehat{\gamma}}_i|^2\Big)\,dt\xrightarrow{i\rightarrow \infty} 0.
 \end{equation*}
 In addition, since $\dot{\widehat{\gamma}}_i\rightarrow \dot{\gamma}$ a.e. on $[0,T]$, by Lebesgue's dominated convergence theorem we obtain
 \begin{equation}
 C\int_0^T\int_0^1 \Big(1+|\dot{\widehat{\gamma}}_i|+|\dot{\gamma}|\Big)\left|\dot{\widehat{\gamma}}_i(t)-\dot{\gamma}(t)\right|\,dt\xrightarrow{i\rightarrow \infty} 0.
 \end{equation}
 This gives $(b)$ and the proof is complete.
 \end{proof}
 \begin{remark}\label{red}
 The above proof can be adapted to treat the case of a locally Lipschitz Lagrangian $L$ as was mentioned in Remark \ref{1o11}. Indeed, it suffices to replace the gradient $D_vL(\overline{\gamma}(t),\dot{\overline{\gamma}}(t))$ with a measurable selection of the subdifferential $\partial_vL(\overline{\gamma}(t),\dot{\overline{\gamma}}(t))$.
 \end{remark}
\subsection{Proof of Theorem \ref{tesistenza}}\label{sub}
In this section we prove Theorem \ref{tesistenza} using a fixed point argument. First of all, we recall that, by Theorem \ref{td}, for any $\eta\in\mathcal{P}_{m_0}(\Gamma)$, there exists a unique Borel measurable family of probabilities $\{\eta_x\}_{x\in\overline{\Omega}}$ on $\Gamma$ which disintegrates $\eta$ in the sense that
\begin{equation}\label{dise}
\begin{cases}
	\eta(d\gamma)=\int_{\overline{\Omega}} \eta_x(d\gamma) \,dm_0(x),\\
	supp(\eta_x)\subset \Gamma[x] \ \ m_0-\mbox{a.e.} \ x\in \overline{\Omega}.
	\end{cases}
	\end{equation}
We introduce the set-valued map $
E:\mathcal{P}_{m_0}(\Gamma) \rightrightarrows \mathcal{P}_{m_0}(\Gamma)
$
by defining, for any $\eta\in\mathcal{P}_{m_0}(\Gamma)$, 
\begin{equation}\label{eeta}
E(\eta)=\Big\{\widehat{\eta}\in\mathcal{P}_{m_0}(\Gamma): supp(\widehat{\eta}_x)\subseteq \Gamma^\eta[x] \ m_0-\text{a.e.}\ x\in\overline{\Omega}\Big\}.
\end{equation}
Then, it is immediate to realize that $\eta\in\mathcal{P}_{m_0}(\Gamma)$ is a constrained MFG equilibrium for $m_0$ if and only if $\eta\in E(\eta)$.
We will therefore show that the set-valued map $E$ has a fixed point. For this purpose, we will apply Kakutani's Theorem \cite{ka}. The following lemmas are intended to check that the assumptions of such a theorem are satisfied by $E$.
\begin{lemma}\label{l} 
For any $\eta\in \mathcal{P}_{m_0}(\Gamma)$, $E(\eta)$ is a nonempty convex set.
\end{lemma}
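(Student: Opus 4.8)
The plan is to prove the two assertions separately, the nonemptiness being the substantial one. For nonemptiness I would exhibit a concrete element of $E(\eta)$ as the push-forward of $m_0$ under a measurable selection of the minimizing multifunction $x\mapsto\Gamma^\eta[x]$. First I would observe that, by Lemma \ref{pr1}$(ii)$, every minimizer lies in $\Gamma_K:=\{\gamma\in\Gamma:\|\dot\gamma\|_2\le K\}$. This set is compact in $\Gamma$ for the uniform metric: the bound $\|\dot\gamma\|_2\le K$ yields, via Cauchy--Schwarz, the equicontinuity estimate $|\gamma(t)-\gamma(s)|\le K|t-s|^{1/2}$, so Ascoli--Arzel\`a gives uniform convergence along subsequences, while weak $L^2$-compactness of the derivatives together with the weak lower semicontinuity of the $L^2$-norm shows that every uniform limit again belongs to $\Gamma_K$.

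Next I would invoke Lemma \ref{3.4}, by which $\Gamma^\eta[\cdot]$ has closed graph. Its values are nonempty (Lemma \ref{pr1}$(i)$) and, being closed subsets of the compact set $\Gamma_K$, compact; since the range is compact, a closed graph is equivalent to upper semicontinuity, so for every open $U\subset\Gamma_K$ the set $\{x:\Gamma^\eta[x]\cap U\neq\emptyset\}$ is an $F_\sigma$, hence Borel, and the multifunction is measurable. The Kuratowski--Ryll-Nardzewski selection theorem then furnishes a Borel map $\sigma:\overline\Omega\to\Gamma_K$ with $\sigma(x)\in\Gamma^\eta[x]$ for every $x$. Setting $\widehat\eta:=\sigma\sharp m_0$, the identity $e_0(\sigma(x))=\sigma(x)(0)=x$ gives $e_0\sharp\widehat\eta=(e_0\circ\sigma)\sharp m_0=m_0$, so $\widehat\eta\in\mathcal{P}_{m_0}(\Gamma)$; moreover, by the uniqueness clause of Theorem \ref{td}, its conditional measures are $\widehat\eta_x=\delta_{\sigma(x)}$, whose support $\{\sigma(x)\}$ is contained in $\Gamma^\eta[x]$. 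Hence $\widehat\eta\in E(\eta)$ and $E(\eta)\neq\emptyset$.

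For convexity I would take $\widehat\eta^1,\widehat\eta^2\in E(\eta)$ and $\lambda\in[0,1]$, and set $\widehat\eta:=\lambda\widehat\eta^1+(1-\lambda)\widehat\eta^2$. It is plainly a probability measure and $e_0\sharp\widehat\eta=\lambda m_0+(1-\lambda)m_0=m_0$, so $\widehat\eta\in\mathcal{P}_{m_0}(\Gamma)$. Testing against nonnegative Borel functions and again using the uniqueness of disintegrations in Theorem \ref{td} identifies the conditional measures of $\widehat\eta$ as $\widehat\eta_x=\lambda\widehat\eta^1_x+(1-\lambda)\widehat\eta^2_x$ for $m_0$-a.e.\ $x$. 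Since $supp(\lambda\mu_1+(1-\lambda)\mu_2)\subseteq supp(\mu_1)\cup supp(\mu_2)$ and both $supp(\widehat\eta^1_x)$ and $supp(\widehat\eta^2_x)$ lie in $\Gamma^\eta[x]$, we obtain $supp(\widehat\eta_x)\subseteq\Gamma^\eta[x]$, i.e.\ $\widehat\eta\in E(\eta)$.

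The main obstacle is the nonemptiness step, and within it the measurable selection: one must fix the correct compact ambient space $\Gamma_K$ so that the closed-graph property of Lemma \ref{3.4} upgrades to upper semicontinuity, and hence to Borel measurability of the multifunction, which is precisely what the selection theorem requires. By contrast, convexity is routine once the disintegration of a convex combination has been identified through the uniqueness part of Theorem \ref{td}.
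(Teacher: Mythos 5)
Your proof is correct and follows essentially the same route as the paper: a Borel measurable selection of the argmin multifunction $x\mapsto\Gamma^\eta[x]$ (nonempty by Lemma \ref{pr1}, closed-graph by Lemma \ref{3.4}) pushed forward through $m_0$ gives nonemptiness, and convexity is obtained by disintegrating a convex combination. The only difference is cosmetic: the paper cites Aubin--Frankowska (Theorems 8.1.3 and 8.1.4) for measurability and selection, whereas you justify measurability directly via compactness of $\Gamma_K$ and upper semicontinuity and then invoke Kuratowski--Ryll-Nardzewski.
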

\begin{proof}
First, we note that $E(\eta)$ is a nonempty set. Indeed, by $(i)$ of Lemma \ref{pr1}, Lemma \ref{3.4}, and \cite[ Theorem 8.1.4]{af} we have that $x\longmapsto \Gamma^\eta[x]$ is measurable. Moreover, by \cite[ Theorem 8.1.3]{af}, $x\longmapsto \Gamma^\eta[x]$ has a Borel measurable selection $x\longmapsto{\gamma}^\eta_x$. Thus, the measure $\widehat{\eta}$, defined by $\widehat{\eta}(\mathcal{B})=\int_{\overline{\Omega}} \delta_{\{{\gamma}^\eta_x\}}(\mathcal{B})\,dm_0(x)$ for any $\mathcal{B}\in\mathscr{B}(\Gamma)$, belongs to $E(\eta)$. \\
Now we want to check that $E(\eta)$ is a convex set.
Let $\{\eta_i\}_{i=1,2} \in E(\eta)$ and let $\lambda_1 , \lambda_2 \geq 0$ be such that $\lambda_1+\lambda_2=1$. Since $\eta_i$ are Borel probability measures, $\widehat{\eta}:= \lambda_1\eta_1 + \lambda_2 \eta_2$ is a Borel probability measure as well. Moreover, for any Borel set $B\in \mathscr{B}(\overline{\Omega})$ we have that
\begin{equation}
e_0 \sharp \widehat{\eta} (B)=\widehat{\eta} (e_0^{-1}(B))= \sum_{i=1}^{2} \lambda_i \eta_i(e_0^{-1}(B))=\sum_{i=1}^{2} \lambda_i e_0 \sharp \eta_i(B)=\sum_{i=1}^{2} \lambda_i m_0(B) =m_0 (B).
\end{equation}
So, $\widehat{\eta} \in {\mathcal P}_{m_0}(\Gamma)$. Since $\eta_1 \in E(\eta)$, there exists a unique Borel measurable family of probabilities $\{\eta_{1,x}\}_{x\in\overline{\Omega}}$ on $\Gamma$ which disintegrates $\eta_1$ as in \eqref{dise} and there exists $A_1\subset \overline{\Omega}$, with $m_0(A_1)=0$, such that
\begin{equation}
supp(\eta_{1,x})\subset \Gamma^\eta[x], \ \ \ x\in \overline{\Omega}\setminus A_1.
\end{equation}
In the same way, $\eta_2(d\gamma)=\int_{\overline{\Omega}}\eta_{2,x}(\,d\gamma)\,dm_0(x)$ can be disintegrated and one has that 
\begin{equation}
supp(\eta_{2,x})\subset \Gamma^\eta[x] \ \ \ x\in \overline{\Omega}\setminus A_2,
\end{equation}
where $A_2$ is such that $m_0(A_2)=0$.
Hence, $\widehat{\eta}$ can be disintegrated in the following way:  
\begin{equation}
\begin{cases}
\widehat{\eta}(d\gamma)= \int_{\overline \Omega} \Big(\lambda_1\eta_{1,x}+\lambda_2 \eta_{2,x}\Big)(d\gamma) dm_0(x),\\
supp(\lambda_1\eta_{1,x}+\lambda_2 \eta_{2,x})\subset \Gamma^\eta[x] \ \ \ x\in \overline{\Omega}\setminus(A_1\cup A_2),
\end{cases}
\end{equation}
where $m_0(A_1\cup A_2)=0$. This completes the proof.
\end{proof}
\noindent
	\begin{lemma}\label{gchiuso}
	The map $E:\mathcal{P}_{m_0}({\Gamma}) \rightrightarrows \mathcal{P}_{m_0}({\Gamma})$ has closed graph. 
	\end{lemma}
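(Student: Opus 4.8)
The plan is to verify the closed graph property directly. I take sequences $\eta_i\to\eta$ and $\widehat{\eta}_i\to\widehat{\eta}$, narrowly, in $\mathcal{P}_{m_0}(\Gamma)$, with $\widehat{\eta}_i\in E(\eta_i)$ for every $i$, and I must show $\widehat{\eta}\in E(\eta)$. The first step is to reformulate membership in $E(\eta)$ as a statement about supports rather than disintegrations. To this end I introduce the set of $\eta$-minimizing arcs
\begin{equation*}
\Gamma^\eta:=\bigcup_{x\in\overline{\Omega}}\Gamma^\eta[x]=\Big\{\gamma\in\Gamma:\ J_\eta[\gamma]=\min_{\Gamma[\gamma(0)]}J_\eta\Big\}.
\end{equation*}
I claim that $\Gamma^\eta$ is closed. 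Indeed, if $\gamma_k\in\Gamma^\eta$ and $\gamma_k\to\gamma$ uniformly, then $x_k:=\gamma_k(0)\to\gamma(0)=:x$ and $\gamma_k\in\Gamma^\eta[x_k]$, so the closed graph of $\Gamma^\eta[\cdot]$ established in Lemma \ref{3.4} (applied to the constant sequence $\eta_k\equiv\eta$) yields $\gamma\in\Gamma^\eta[x]\subseteq\Gamma^\eta$. The same argument shows each $\Gamma^\eta[x]$ is closed, since $\Gamma[x]$ is closed in the uniform metric.

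Next I record the equivalence: for $\widehat{\eta}\in\mathcal{P}_{m_0}(\Gamma)$ one has $\widehat{\eta}\in E(\eta)$ if and only if $supp(\widehat{\eta})\subseteq\Gamma^\eta$. For the forward direction, the disintegration as in \eqref{dise} together with $supp(\widehat{\eta}_x)\subseteq\Gamma^\eta[x]$ for $m_0$-a.e.\ $x$ gives $\widehat{\eta}(\Gamma^\eta)=\int_{\overline{\Omega}}\widehat{\eta}_x(\Gamma^\eta)\,dm_0(x)=1$; since $\Gamma^\eta$ is closed and $supp(\widehat{\eta})$ is the smallest closed set of full measure, $supp(\widehat{\eta})\subseteq\Gamma^\eta$. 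Conversely, if $supp(\widehat{\eta})\subseteq\Gamma^\eta$ then $\widehat{\eta}(\Gamma^\eta)=1$, and disintegrating $\widehat{\eta}$ along $e_0$ and using $\Gamma^\eta\cap\Gamma[x]=\Gamma^\eta[x]$ together with $\widehat{\eta}_x(\Gamma[x])=1$ gives $\int_{\overline{\Omega}}\widehat{\eta}_x(\Gamma^\eta[x])\,dm_0(x)=1$, whence $\widehat{\eta}_x(\Gamma^\eta[x])=1$ for $m_0$-a.e.\ $x$; as $\Gamma^\eta[x]$ is closed this means $supp(\widehat{\eta}_x)\subseteq\Gamma^\eta[x]$ for $m_0$-a.e.\ $x$, i.e.\ $\widehat{\eta}\in E(\eta)$.

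With this reformulation the convergence argument is short. Since $\widehat{\eta}_i\in E(\eta_i)$, the equivalence gives $supp(\widehat{\eta}_i)\subseteq\Gamma^{\eta_i}$ for every $i$. Because $\widehat{\eta}_i\to\widehat{\eta}$ narrowly, Proposition \ref{p21} yields $supp(\widehat{\eta})\subseteq K\text{-}\liminf_{i\to\infty}supp(\widehat{\eta}_i)$: thus for each $\overline{\gamma}\in supp(\widehat{\eta})$ there exist $\gamma_i\in supp(\widehat{\eta}_i)\subseteq\Gamma^{\eta_i}$ with $\gamma_i\to\overline{\gamma}$. Setting $x_i:=\gamma_i(0)$ we have $x_i\to\overline{\gamma}(0)=:x$ and $\gamma_i\in\Gamma^{\eta_i}[x_i]$; since in addition $\eta_i\to\eta$ narrowly, Lemma \ref{3.4} applies and gives $\overline{\gamma}\in\Gamma^\eta[x]\subseteq\Gamma^\eta$. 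Hence $supp(\widehat{\eta})\subseteq\Gamma^\eta$, and by the equivalence $\widehat{\eta}\in E(\eta)$, which proves that $E$ has closed graph.

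I expect the only delicate point to be the measure-theoretic translation between the disintegration-based definition \eqref{eeta} of $E$ and the support characterization, which rests on the closedness of $\Gamma^\eta$ (and of each $\Gamma^\eta[x]$) and on the disintegration theorem. Once this bookkeeping is in place, the genuinely analytic content --- that uniform limits of $\eta_i$-minimizers are $\eta$-minimizers --- is supplied entirely by Lemma \ref{3.4}, so no further variational estimates are needed here.
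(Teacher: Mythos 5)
Your proof is correct and follows essentially the same route as the paper's: both arguments reduce the closed-graph property to Proposition \ref{p21} (to approximate each arc in $supp(\widehat{\eta})$ by arcs $\gamma_i\in supp(\widehat{\eta}_i)$) combined with Lemma \ref{3.4} (stability of minimizers under narrow convergence of the $\eta_i$). The only difference is that you make explicit the equivalence between the disintegration-based definition \eqref{eeta} and the condition $supp(\widehat{\eta})\subseteq\Gamma^\eta$, and by working with $supp(\widehat{\eta})$ rather than with $supp(\widehat{\eta}_x)$ you sidestep the (true but unstated) fact $supp(\widehat{\eta}_x)\subseteq supp(\widehat{\eta})$ for $m_0$-a.e.\ $x$ that the paper's version tacitly uses when invoking Proposition \ref{p21}.
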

	\begin{proof}
	Let $\eta_i$, $\eta$ $\in {\mathcal P}_{m_0}({\Gamma})$ be such that $ \eta_i$ is narrowly convergent to $\eta$. Let $\widehat \eta_i \in E(\eta_i)$ be such that $ \widehat\eta_i$ is narrowly convergent to $\widehat \eta$. We have to prove that $\widehat \eta \in E(\eta)$. 
Since $\widehat{\eta}_i$ narrowly converges to $\widehat \eta$, we have that $\widehat \eta\in \mathcal{P}_{m_0}(\Gamma)$ and there  exists a unique Borel measurable family of probabilities $\{\widehat{\eta}_{x}\}_{x\in\overline\Omega}$ on $\Gamma$ such that $\widehat{\eta}(d\gamma)=\int_{\overline{\Omega}} \widehat{\eta}_{x}(d\gamma)\,dm_0(x)$ and $supp(\widehat{\eta}_x)\subset \Gamma[x]$ for $m_0$-a.e. $x \in \overline{\Omega}$. Hence, $\widehat{\eta}\in E(\eta)$ if and only if $supp(\widehat\eta_x)\subseteq \Gamma^\eta[x]$ for $m_0$-a.e $x\in\overline{\Omega}$. Let $\Omega_0\subset \overline{\Omega}$ be an $m_0$-null set such that
\begin{equation*}
 supp(\widehat{\eta}_x)\subset \Gamma[x] \ \ \ \ \forall x\in\overline{\Omega}\setminus \Omega_0.
 \end{equation*}
 Let $x\in\overline{\Omega}\setminus \Omega_0$ and let $\widehat{\gamma}\in supp(\widehat{\eta}_x)$.
Since $\widehat{\eta}_i$ narrowly converges to $\widehat{\eta}$, then, by Proposition \ref{p21}, one has that  
\begin{equation*}
\exists \ \widehat\gamma_{i} \in \ supp (\widehat \eta_{i}) \ \ \mbox{such that} \  \lim_{i\rightarrow \infty} \widehat\gamma_{i} =\widehat\gamma.
\end{equation*} 
Let $\widehat{\gamma}_i(0)=x_i$, with $x_i\in\overline{\Omega}$.
Since $\widehat{\eta}_i\in E(\eta_i)$ and $\widehat{\gamma}_{i} \in supp(\widehat{\eta}_{i})$, we have that $\widehat{\gamma}_{i}$ is an optimal trajectory for $J_{\eta_i}[\cdot]$, i.e.,
\begin{equation}\label{mini}
J_{\eta_i}[\widehat{\gamma}_{i}] \leq J_{\eta_i}[\gamma] \ \ \ \ \forall \gamma\in\Gamma[x_i].
\end{equation}
As $\eta_i$ narrowly converges to $\eta$, applying Lemma \ref{3.4}, we conclude that $\widehat{\gamma}\in\Gamma^\eta[x]$. Since $x$ is any point in $\overline{\Omega}\setminus\Omega_0$, we have shown that $\widehat{\eta}\in E(\eta)$.
\end{proof}
\noindent
We denote by $\Gamma_K$ the set of trajectories $\gamma\in\Gamma$ such that $\gamma$ satisfies \eqref{k1}, i.e.,
\begin{equation}\label{gk}
\Gamma_K=\left\{\gamma\in\Gamma :||\dot\gamma||_2\leq K \right\}
\end{equation}
where $K$ is the constant given by \eqref{k11}. By the definition of $E(\eta)$ in \eqref{eeta} and Lemma \ref{pr1}, we deduce that
\begin{equation}
E(\eta)\subseteq \mathcal{P}_{m_0}(\Gamma_K) \ \ \ \forall \eta\in \mathcal{P}_{m_0}(\Gamma).
\end{equation} 
\begin{remark}\label{r1}
In general $\Gamma$ fails to be complete as a metric space. Then, by Theorem \ref{tp}, $\mathcal{P}_{m_0}(\Gamma)$ is not a compact set. On the other hand, if $\Gamma$ is replaced by $\Gamma_K$ then $\mathcal{P}_{m_0}(\Gamma_K)$  is a compact convex subset of $\mathcal{P}_{m_0}(\Gamma)$. Indeed, the convexity of $\mathcal{P}_{m_0}(\Gamma_K)$ follows by the same argument used in the proof of Lemma \ref{l}. As for compactness, let $\{\eta_k\}\subset \mathcal{P}_{m_0}(\Gamma_K)$. Since $\Gamma_K$ is a compact set, $\{\eta_k\}$ is tight. So, by Theorem \ref{p21}, one finds a subsequence, that we denote again by $\eta_k$, which narrowly converges to some probability measure $\eta\in\mathcal{P}_{m_0}(\Gamma_K)$.
\end{remark}
\noindent
We will restrict domain of interest to $\mathcal{P}_{m_0}(\Gamma_K)$, where ${\Gamma}_K$ is given by \eqref{gk}. Hereafter, we denote by $E$ the restriction $E_{\mid_{\mathcal{P}_{m_0}(\Gamma_K)}}$.\\
\underline{\textit{Conclusion}}\\
By Remark \ref{r1} and Remark \ref{nonempty}, ${\mathcal P}_{m_0}(\Gamma_K)$ is a nonempty compact convex set. Moreover, by Lemma \ref{l}, $E(\eta)$ is nonempty convex set for any $\eta\in\ {\mathcal P}_{m_0}(\Gamma_K)$ and, by Lemma \ref{gchiuso}, the set-valued map $E$ has closed graph. Then, the assumptions of Kakutani's Theorem are satisfied and so there exists  $\overline \eta\in {\mathcal P}_{m_0}(\Gamma_K)$ such that  $\overline \eta\in E(\overline \eta)$. 
\section{Mild solution of the constrained MFG problem}
In this section we define mild solutions of the constrained MFG problem in $\overline{\Omega}$. Moreover, under the assumptions of Section \ref{ipotesi}, we prove the existence of such solutions. Then, we give a uniqueness result under a classical monotonicity assumption on $F$ and $G$.
\begin{definition}
	We say that $(u,m)\in  C([0,T]\times \overline{\Omega})\times C([0,T];\mathcal{P}(\overline{\Omega}))$ is a mild solution of the constrained MFG problem in $\overline{\Omega}$ if there exists a constrained MFG equilibrium $\eta\in\mathcal{P}_{m_0}(\Gamma)$ such that 
	\begin{enumerate}
	\item [(i)] $ m(t)= e_t\sharp \eta$ for all $t\in[0,T]$;
	\item[(ii)] $u$ is given by
	\begin{equation}\label{v}
	u(t,x)=  \inf_{\tiny\begin{array}{c}
		\gamma\in \Gamma\\
		\gamma(t)=x
		\end{array}} 
	\left\{\int_t^T \left[L(\gamma(s),\dot \gamma(s))+ F(\gamma(s), m(s))\right]\ ds + G(\gamma(T),m(T))\right\},  
	\end{equation} 
	for $(t,x)\in [0,T]\times \overline{\Omega}$.
	\end{enumerate}
\end{definition}
\noindent
A direct consequence of Theorem \ref{tesistenza} is the following result.
\begin{corollary}\label{cesistenza}
Let $\Omega$ be a bounded open subset of $\mathbb{R}^n$ with $C^2$ boundary and let $m_0\in\mathcal{P}(\overline{\Omega})$. Suppose that (L1)-(L3) hold true. Let $F:\overline{\Omega}\times\mathcal{P}(\overline{\Omega})\rightarrow \mathbb{R}$ and $G:\overline{\Omega}\times \mathcal{P}(\overline{\Omega})\rightarrow \mathbb{R}$ be continuous. There exists at least one mild solution $(u,m)$ of the constrained MFG problem in $\overline{\Omega}$.
\end{corollary}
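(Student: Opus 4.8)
The plan is to obtain the corollary as an immediate consequence of Theorem \ref{tesistenza}, so the only real work is to verify that the two objects required by the definition of mild solution are well defined and have the claimed regularity. First I would invoke Theorem \ref{tesistenza} to produce a constrained MFG equilibrium $\eta\in\mathcal{P}_{m_0}(\Gamma)$ for $m_0$; this is exactly where the hypotheses (L1)--(L3) and the continuity of $F$ and $G$ are consumed, since those are precisely the assumptions of that theorem. Given such an $\eta$, I define $m(t)=e_t\sharp\eta$ for $t\in[0,T]$, and I define $u$ by the value-function formula \eqref{v}. It then remains to check that $(u,m)$ actually lies in $C([0,T]\times\overline{\Omega})\times C([0,T];\mathcal{P}(\overline{\Omega}))$, as the definition of mild solution demands.

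For the regularity of $m$, the claim $m\in C([0,T];\mathcal{P}(\overline{\Omega}))$ is already available: point $(i)$ of Lemma \ref{3.2} guarantees that $m^\eta=e_{\cdot}\sharp\eta$ is continuous from $[0,T]$ into $\mathcal{P}(\overline{\Omega})$ for \emph{any} $\eta\in\mathcal{P}(\Gamma)$, and in fact the final assertion of Lemma \ref{pr1} upgrades this to $\tfrac12$-H\"older continuity with the explicit constant $K$ from \eqref{k11}, using that $\eta$ is an equilibrium. So condition $(i)$ of the definition holds with no further argument.

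For the regularity of $u$, I must show that the function defined by the infimum in \eqref{v} is continuous on $[0,T]\times\overline{\Omega}$. Since $m\in C([0,T];\mathcal{P}(\overline{\Omega}))$ and $F,G$ are continuous (hence uniformly continuous and bounded on the compact set $\overline{\Omega}\times\mathcal{P}(\overline{\Omega})$ by Remark \ref{1o11}), the running cost $(s,x)\mapsto F(x,m(s))$ and terminal cost $x\mapsto G(x,m(T))$ are continuous and bounded. The value function $u$ is then the value of a finite-horizon state-constrained calculus-of-variations problem with Lagrangian $L$ satisfying the superlinear coercivity \eqref{l3b} and the growth bounds \eqref{l11}--\eqref{l12}. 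Its continuity follows from the same ingredients already used in this paper: the a priori $\|\dot\gamma\|_2$ bound of Lemma \ref{pr1}, which confines near-optimal arcs to the compact set $\Gamma_K$ and yields a uniform $\tfrac12$-H\"older modulus; the trajectory-approximation Proposition \ref{aprt}, which transplants a competitor from $\Gamma[x]$ to $\Gamma[x']$ with controlled cost when $x'\to x$; and the lower-semicontinuity argument of Lemma \ref{3.4} (Tonelli's direct method via (L2)--(L3)), which controls the limit from the other side. Combining an upper estimate from Proposition \ref{aprt} with the lower bound from these semicontinuity considerations gives continuity in $x$ uniformly in $t$, while continuity in $t$ follows from the dynamic programming structure of \eqref{v} together with the boundedness of the integrand over $\Gamma_K$.

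The only genuinely delicate point is the continuity of $u$ up to the boundary $\partial\Omega$ and across changes of the initial time $t$, because the state constraint $\gamma(s)\in\overline{\Omega}$ must be maintained when one perturbs an optimal arc; this is exactly the issue that the constraint-respecting approximation in Proposition \ref{aprt} is designed to resolve, so I expect no new obstruction beyond assembling the estimates already proved. Everything else is a routine reduction to Theorem \ref{tesistenza}, and no additional hypotheses are needed.
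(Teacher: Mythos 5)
Your proposal follows the paper exactly: the paper offers no separate proof, simply declaring the corollary ``a direct consequence of Theorem \ref{tesistenza}'', and your reduction---take the equilibrium $\eta$ from that theorem, set $m(t)=e_t\sharp\eta$, define $u$ by \eqref{v}---is precisely that argument. The one point you elaborate that the paper leaves implicit, the continuity of $u$ on $[0,T]\times\overline{\Omega}$, is handled with the correct ingredients (Proposition \ref{aprt} for constraint-respecting perturbations of the initial point, the coercivity bound of Lemma \ref{pr1}, and the semicontinuity machinery of Lemma \ref{3.4}), so your sketch is sound and consistent with the paper's toolkit.
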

\noindent
Before proving our uniqueness result, we recall the following definitions.
\begin{definition}
	We say that $F:\overline{\Omega}\times \mathcal{P}(\overline{\Omega})\rightarrow \mathbb{R}$ is monotone if
	\begin{equation}\label{g}
	\int_{\overline \Omega} (F(x,m_1)-F(x,m_2))d(m_1-m_2)(x) \geq 0,
	\end{equation}
	for any $m_1, m_2 \in{\mathcal P}(\overline \Omega)$.\\
	We say that $F$ is strictly monotone if 
	\begin{equation}\label{f1}
	\int_{\overline \Omega} (F(x,m_1)-F(x,m_2))d(m_1-m_2)(x)\ \geq\ 0,
	\end{equation}
	for any $m_1,m_2\in {\mathcal P}(\overline \Omega)$ and $\int_{\overline \Omega} (F(x,m_1)-F(x,m_2))d(m_1-m_2)(x)=0$ if and only if $F(x,m_1)=F(x,m_2)$ for all $x\in\overline{\Omega}$.
	\end{definition}
\begin{example}
Assume that $F:\overline{\Omega}\times\mathcal{P}(\overline{\Omega})\rightarrow \mathbb{R}$ is of the form
\begin{equation}
F(x,m)=\int_{\overline{\Omega}} f(y,(\phi\star m)(y))\phi(x-y)\,dy,
\end{equation}
where $\phi:\mathbb{R}^n\rightarrow \mathbb{R}$ is a smooth even kernel with compact support and $f:\overline{\Omega}\times\mathbb{R}\rightarrow\mathbb{R}$ is a smooth function such that $z\rightarrow f(x,z)$ is strictly increasing for all $x\in\overline{\Omega}$. Then $F$ satisfies condition \eqref{f1}.\\
Indeed, for any $m_1$, $m_2\in\mathcal{P}(\overline{\Omega})$, we have that
\begin{align*}
&\int_{\overline{\Omega}}\left[F(x,m_1)-F(x,m_2)\right]\,d\left(m_1-m_2\right)(x)\\
&=\int_{\overline{\Omega}}\int_{\overline{\Omega}} \left[f(y,(\phi\star m_1)(y))-f(y,(\phi\star m_2)(y))\right]\phi(x-y)\,dy\,d\left(m_1-m_2\right)(x)\\
&=\int_{\overline{\Omega}} \left[f(y,(\phi\star m_1)(y))-f(y,(\phi\star m_2)(y))\right]\int_{\overline{\Omega}}\phi(x-y)\,d\left(m_1-m_2\right)(x)\,dy\\
&=\int_{\overline{\Omega}}\left[f(y,(\phi\star m_1)(y))-f(y,(\phi\star m_2)(y))\right]\left[(\phi\star m_1)(y)-(\phi\star m_2)(y)\right]\,dy.
\end{align*}
Since $z\rightarrow f(x,z)$ is increasing, then one has that
\begin{equation*}
\int_{\overline{\Omega}}\left[f(y,(\phi\star m_1)(y))-f(y,(\phi\star m_2)(y))\right]\left[(\phi\star m_1)(y)-(\phi\star m_2)(y)\right]\,dy\geq 0.
\end{equation*}
Moreover, if the term on the left-side above is equal to zero, then we obtain
\begin{equation*}
\left[f(y,(\phi\star m_1)(y))-f(y,(\phi\star m_2)(y))\right]\left[(\phi\star m_1)(y)-(\phi\star m_2(y))\right]=0 \ \ \ \mbox{a.e.}\ \  y\in\overline{\Omega}.
\end{equation*}
As $z\rightarrow f(x,z)$ is strictly increasing, we deduce that $\phi\star m_1(y)=\phi\star m_2(y)$ for any $y\in \overline{\Omega}$ and so $F(\cdot,m_1)=F(\cdot,m_2)$.
\end{example}
\begin{theorem}\label{u}
	Suppose that $F$ and $G$ satisfy \eqref{f1}. Let $\eta_1$, $\eta_2\in \mathcal{P}_{m_0}(\Gamma)$ be constrained MFG equilibria and let $J_{\eta_1}$ and $J_{\eta_2}$ be the associated functionals.
	Then $J_{\eta_1}$ is equal to $J_{\eta_2}$. Consequently, if $(u_1,m_1)$, $(u_2,m_2)$ are mild solutions of the constrained MFG problem in $\overline{\Omega}$, then $u_1=u_2$.
\end{theorem}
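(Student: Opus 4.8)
The plan is to adapt the Lasry--Lions monotonicity argument to the present Lagrangian setting, exploiting the crucial fact that the running cost $L(\gamma,\dot\gamma)$ does not depend on the measure and hence cancels in any difference $J_{\eta_1}[\gamma]-J_{\eta_2}[\gamma]$. Writing $m_i(t)=m^{\eta_i}(t)=e_t\sharp\eta_i$, the central object I would study is the cross-quantity
\begin{equation*}
\mathcal{A}:=\int_\Gamma \big(J_{\eta_1}[\gamma]-J_{\eta_2}[\gamma]\big)\,d(\eta_1-\eta_2)(\gamma).
\end{equation*}
The first step is to show $\mathcal{A}\ge 0$. Since the $L$-terms cancel, $J_{\eta_1}[\gamma]-J_{\eta_2}[\gamma]$ involves only $F$ and $G$ evaluated along $\gamma$; pushing forward through the evaluation maps $e_t$ (and using Fubini, legitimate because $F$ and $G$ are bounded by Remark~\ref{1o11}) converts $\mathcal{A}$ into
\begin{equation*}
\begin{split}
&\int_0^T\!\!\int_{\overline\Omega}\big(F(x,m_1(t))-F(x,m_2(t))\big)\,d(m_1(t)-m_2(t))(x)\,dt\\
&\qquad+\int_{\overline\Omega}\big(G(x,m_1(T))-G(x,m_2(T))\big)\,d(m_1(T)-m_2(T))(x),
\end{split}
\end{equation*}
which is nonnegative by the monotonicity \eqref{f1} of $F$ and $G$.

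The second step is to prove the reverse bound $\mathcal{A}\le 0$ using optimality. Setting $u_i(0,x)=\min_{\Gamma[x]}J_{\eta_i}$, I would split $\mathcal{A}=\big(\int J_{\eta_1}\,d\eta_1-\int J_{\eta_1}\,d\eta_2\big)+\big(\int J_{\eta_2}\,d\eta_2-\int J_{\eta_2}\,d\eta_1\big)$. Because $\eta_1$ is an equilibrium, $J_{\eta_1}[\gamma]=u_1(0,\gamma(0))$ for $\eta_1$-a.e. $\gamma$, whereas $J_{\eta_1}[\gamma]\ge u_1(0,\gamma(0))$ for \emph{every} $\gamma$; since both $\eta_1$ and $\eta_2$ push forward to $m_0$ under $e_0$, the push-forward formula gives $\int J_{\eta_1}\,d\eta_1=\int_{\overline\Omega}u_1(0,x)\,dm_0=\int u_1(0,e_0(\gamma))\,d\eta_2\le\int J_{\eta_1}\,d\eta_2$, so the first bracket is $\le 0$; the second is handled symmetrically. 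Here the a priori bound of Lemma~\ref{pr1}, that equilibria are supported on $\Gamma_K$, guarantees all four integrals are finite, $L$ having at most quadratic growth by (L1).

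Combining the two steps forces $\mathcal{A}=0$ and, more importantly, equality in every inequality. In particular each nonnegative contribution in the first step must vanish, so that
\begin{equation*}
\begin{split}
&\int_{\overline\Omega}\big(F(x,m_1(t))-F(x,m_2(t))\big)\,d(m_1(t)-m_2(t))=0\quad\text{for a.e. }t,\\
&\int_{\overline\Omega}\big(G(x,m_1(T))-G(x,m_2(T))\big)\,d(m_1(T)-m_2(T))=0.
\end{split}
\end{equation*}
Invoking the strict monotonicity in \eqref{f1}, this yields $F(\cdot,m_1(t))=F(\cdot,m_2(t))$ for a.e. $t$ and $G(\cdot,m_1(T))=G(\cdot,m_2(T))$. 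Substituting back into the definition of $J_{\eta_i}$ shows $J_{\eta_1}[\gamma]=J_{\eta_2}[\gamma]$ for every $\gamma\in\Gamma$, the first assertion; and since the value functions $u_1,u_2$ in \eqref{v} are infima of functionals whose integrands now coincide, $u_1=u_2$ follows at once.

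I expect the main obstacle to be the rigorous bookkeeping of the optimality comparison in the second step---specifically, justifying the passage from the equilibrium support condition (an $\eta_i$-a.e. statement about minimality over $\Gamma[\gamma(0)]$) to the integrated identity $\int J_{\eta_i}\,d\eta_i=\int_{\overline\Omega}u_i(0,x)\,dm_0$, together with the finiteness of the cross-integral $\int J_{\eta_1}\,d\eta_2$. Both rest on the fact, from Lemma~\ref{pr1}, that equilibria concentrate on the set $\Gamma_K$ of curves with $\|\dot\gamma\|_2\le K$, where $L$ is integrable and $F,G$ are bounded.
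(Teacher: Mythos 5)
Your proof is correct and takes essentially the same route as the paper: the paper derives the two one-sided inequalities (comparing $J_{\eta_1}$ and $J_{\eta_2}$ along $\eta_i$-optimal curves, integrating against $\eta_i$, pushing forward through $e_t$, and using $e_0\sharp\eta_1=e_0\sharp\eta_2=m_0$) and then sums them, which is precisely your two-sided evaluation of $\mathcal{A}=\int_\Gamma (J_{\eta_1}-J_{\eta_2})\,d(\eta_1-\eta_2)$ followed by the strict-monotonicity squeeze. The difference is purely presentational, and your explicit attention to the finiteness of the cross-integrals via Lemma \ref{pr1} addresses a point the paper leaves implicit.
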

\begin{proof}
	Let $\eta_1$, $\eta_2 \in \mathcal{P}_{m_0}(\Gamma)$ be constrained MFG equilibria, such that $m_1(t) =e_t \sharp \eta_1$ , $m_2(t)= e_t \sharp \eta_2$ and let $u_1$, $u_2$ be the value functions of $J_{\eta_1}$ and $J_{\eta_2}$, respectively. Let $x_0\in\overline{\Omega}$ and let $\gamma$ be an optimal trajectory for $u_1$ at $(0,x_0)$. We get
	\begin{align*}
	u_1(0,x_0)&=\int_0^{T} \Big[L(\gamma(s),\dot \gamma(s))+ F(\gamma(s),  m_1(s))\Big]\ ds +G(\gamma(T),m_1(T)),\\
	u_2(0,x_0)&\leq \int_0^{T} \Big[L(\gamma(s),\dot \gamma(s))+ F(\gamma(s),  m_2(s))\Big]\ ds+G(\gamma(T),m_2(T)).
	\end{align*}
	Therefore, 
	\begin{align*}
	&G(\gamma(T),m_1(T))-G(\gamma(T),m_2(T))\leq u_1(0,x_0)-u_2(0,x_0)\\
	&-\int_0^{T} \Big[L(\gamma(s),\dot \gamma(s))+ F(\gamma(s),  m_1(s))\Big]\ ds + \int_0^{T}\Big[L(\gamma(s),\dot \gamma(s))+ F(\gamma(s),  m_2(s))\Big]\ ds\\
	&=u_1(0,x_0))-u_2(0,x_0)+\int_0^{T} F(\gamma(s),  m_2(s))-F(\gamma(s),  m_1(s))\ ds.
	\end{align*}
	Recalling that $supp(\eta_1)\subset \Gamma^{\eta_1}[x_0]$, we integrate on $\Gamma$ respect to $d\eta_1$ to obtain
	\begin{align*}
	&\int_{\Gamma} \Big[G(\gamma(T),m_1(T))-G(T,m_2(T))\Big] d\eta_1(\gamma)\leq\\
	&\leq\int_{\Gamma}\Big[u_1(0,\gamma(0))-u_2(0,\gamma(0))\Big] d\eta_1(\gamma) +\int_{\Gamma}\int_0^{T}  \Big[F(\gamma(s),  m_2(s))-F(\gamma(s),  m_1(s))\Big]\,ds\,d\eta_1(\gamma).
	\end{align*}
	Recalling the definition of $e_t$ and using the change of variables $e_t(\gamma)=x$ in the above inequality, we get
	\begin{align*}
	\int_{\Gamma}\Big[ G(\overbrace{\gamma(T)}^{e_{T}(\gamma)},m_1(T))-G(\overbrace{\gamma(T)}^{e_{T}(\gamma)},m_2(T))\Big] d\eta_1(\gamma)&=\int_{\overline\Omega}\Big[ G(x,m_1(T))-G(x,m_2(T))\Big]\, m_1(T,dx),\\
	\int_{\Gamma}\Big[u_1(0,\overbrace{\gamma(0)}^{e_{0}(\gamma)})-u_2(0,\overbrace{\gamma(0)}^{e_{0}(\gamma)})\Big] d\eta_1(\gamma)&=\int_{\overline\Omega} \Big[u_1(0,x)-u_2(0,x)\Big]\, m_1(0,dx),\\
	\int_0^{T}\int_{\Gamma}  \Big[F(\overbrace{\gamma(s)}^{e_{s}(\gamma)},  m_2(s))-F(\overbrace{\gamma(s)}^{e_{s}(\gamma)},  m_1(s))\Big]\,d\eta_1(\gamma)\,ds&=\int_0^{T}\int_{\overline\Omega}  \Big[F(x,  m_2(s))-F(x,  m_1(s))\Big]\,m_1(s,dx)\,ds.
	\end{align*}
	So, we deduce that
	\begin{align}\label{5b}
	&\int_{\overline\Omega} \Big[G(x,m_1(T))-G(x,m_2(T))\Big]\, m_1(T,dx)\\
	&\leq\int_{\overline\Omega} \Big[u_1(0,x)-u_2(0,x)\Big]\, m_1(0,dx)+\int_0^{T}\int_{\overline\Omega} \Big[ F(x,  m_2(s))-F(x,  m_1(s))\,m_1(s,dx)\Big]\,ds.\nonumber
	\end{align}
	In a similar way, we obtain 
	\begin{align}\label{5bis}
	&\int_{\overline\Omega} \Big[G(x,m_2(T))-G(x,m_1(T))\Big]\, m_2(T,dx)\\
	&\leq \int_{\overline\Omega} \Big[u_2(0,x)-u_1(0,x)\Big]\, m_2(0,dx)+\int_0^{T}\int_{\overline\Omega}  \Big[F(x,m_1(s))-F(x,m_2(s))\Big]\,m_2(s,dx)\,ds.\nonumber
	\end{align}
	Summing the inequalities (\ref{5b}) and (\ref{5bis}), we deduce that
	\begin{align*}
	&\int_{\overline\Omega} [G(x,m_1(T))-G(x,m_2(T))]\,(m_1(T,dx)-m_2(T,dx))\\
	&\leq \int_{\overline\Omega}[u_1(0,x)-u_2(0,x)]\,(m_1(0,dx)-m_2(0,dx))\\
	&+\int_0^{T} \int_{\overline\Omega} \Big[F(x,m_2(s))-F(x,  m_1(s))\Big]\,(m_1(s,dx)-m_2(s,dx)) \,ds
	\end{align*}
	Since $m_1(0,dx)=m_2(0,dx)=m_0$, by using the monotonicity assumption on $G$ and $F$, we obtain that 
	\begin{align*}
	&0\geq\int_0^{T} \int_{\overline\Omega} \Big[F(x,m_2(s))-F(x,m_1(s))\Big]\,(m_1(s,dx)-m_2(s,dx)) \,ds\geq\\
	& \int_{\overline\Omega} \Big[G(x,m_1(T))-G(x,m_2(T))\Big]\,(m_1(T,dx)-m_2(T,dx))\geq 0.
	\end{align*}
	Therefore,
	\begin{equation*}\label{uc}
	\int_{\overline\Omega} \Big[F(x,m_2(s))-F(x,  m_1(s))\Big]\,(m_1(s,dx)-m_2(s,dx))=0 \ \ \  \mbox{a.e.} \ s \in [0,T],
	\end{equation*}
	and also
	\begin{equation*}
	\int_{\overline\Omega} \Big[G(x,m_1(T))-G(x,  m_2(T))\Big]\,(m_1(T,dx)-m_2(T,dx))=0.
	\end{equation*}
	Thus, by the strict monotonicity of $F$ and $G$, we conclude that $F(x,m_1)=F(x,m_2)$ for all $x \in \overline{\Omega}$ and $G(x,m_1)=G(x,m_2)$ for all $x\in\overline{\Omega}$. Consequently, we have that $J_{\eta_1}$ is equal to $J_{\eta_2}$.
\end{proof}
\begin{remark}
Suppose that $G$ satisfies \eqref{g} and $F$ satisfies the following condition
\begin{equation*}
\int_{\overline \Omega} \Big[F(x,m_1)-F(x,m_2)\Big]d(m_1-m_2)(x)\ >\ 0,
\end{equation*}
for any $m_1,m_2\in {\mathcal P}(\overline \Omega)$ with $m_1\neq m_2$. Then, proceeding as in the proof of Theorem \ref{u}, one can show that the mild solution $(u,m)$ of the constrained MFG problem in $\overline\Omega$ is unique. 
\end{remark}


\begin{thebibliography}{abc99xyz} 
 	\bibitem{5}
 	Ambrosio, L., Gigli, N., Savare, G., \textsl{Gradient flows in metric spaces and in the space of probability measures. Second edition}, Lectures in Mathematics
 	ETH Z\"urich. Birkh\"auser Verlag, Basel, 2008.
 	\bibitem{af}
 	Aubin, J. P., Frankowska, H., \textsl{Set-Valued Analysis}, Birkh\"auser Boston, Basel, Berlin, 1990
 	\bibitem{bd}
 	Bardi, M., and Capuzzo Dolcetta, I., \textsl{Optimal control and  viscosity solutions of Hamilton-Jacobi-Bellman equations}, Birkh\"auser, 1996.
 	\bibitem{cc}
 	Benamou, J. D., Carlier, G., Santambrogio, F., \textsl{Variational Mean Field Games}, 141-171, 2017.
 	\bibitem{c}
 Cannarsa, P., and Sinestari, C., \textsl{Semiconcave functions, Hamilton-Jacobi Equations and optimal control}, Progress in Nonlinear Differential Equations and their Applications, 58. Birkh\"auser Boston Inc., Boston, MA, 2004.
 	\bibitem{34}
 	Cardaliaguet, P., Notes on mean feld games from P.-L. Lions' lectures at Coll\`{e}ge de France, 2012, https://www.ceremade.dauphine.fr/$\sim$ cardalia/MFG100629.pdf.
 	\bibitem{35}
 	Cardaliaguet, P., Marchi, C., \textsl{Regularity of the Eikonal Equation with Neumann Boundary Conditions in the Plane: Application to Fronts with Nonlocal Terms}, SIAM J. Control and Optimization, Vol 45, 1017-1038, 2006.
 	\bibitem{evans} Evans, L.C., Gariepy, R.F., \textsl{Measure Theory and Fine Properties of Functions}, Studies in Adavance Mathematics, CRC Press, Ann Arbor, 1992.
 	\bibitem{h1}
 	Huang, M., Caines, P.E. and Malham\'{e}, R.P., \textsl{Large-population cost-coupled LQG problems with nonuniform agents: Individual-mass behavior and decentralized $\epsilon$-Nash equilibria}, Automatic Control, IEEE Transactions on 52, no. 9, 1560-1571, 2007
 	\bibitem{h2}
 	Huang, M., Malham\'{e}, R.P., Caines, P.E.,
 	\textsl{Large population stochastic dynamic games: closed-loop McKean-Vlasov systems and the Nash certainly equivalence principle.},
 	Communication in information and systems Vol 6, no.3, pp. 221-252, 2006.
 	\bibitem{8}
 	Lasry, J.-M., Lions, P.-L., \textsl{Jeux \`{a}  champ moyen. I. Le cas stationnaire.},
 	C. R. Math. Acad. Sci. Paris 343,no. 9, 619-625, 2006.
 	%
 	\bibitem{9}
 	Lasry, J.-M., Lions, P.-L., \textsl{Jeux \`{a} champ moyen. II. Horizon fini et contr\^{o}le optimal.,}
 	C. R. Math. Acad. Sci. Paris 343, no. 9, 679-684, 2006.
 	\bibitem{10}
 	Lasry, J.-M.,Lions, P.-L., \textsl{Mean field games}
 	Jpn. J. Math. 2, no. 1, 229-260, 2007.
 	\bibitem{ka}
 	Kakutani, S., \textsl{A generalization of Brouwer's fixed point theorem}, Duke Math J., Vol 8, no. 3, 457-459, 1941
  \end{thebibliography}
\end{document}